\documentclass{article}

\usepackage{graphicx}

\usepackage{authblk}

\usepackage{tikz}
\usetikzlibrary{arrows,calc}

\tikzstyle{arc}=[->, shorten <=3pt, shorten >=3pt, >=stealth, line width=1.1pt]
\tikzstyle{edge}=[shorten <=1pt, shorten >=1pt, >=stealth, line width=1.1pt]
\tikzstyle{vertex}=[circle, fill=white, draw, minimum size=5pt,
                    line width=0.75pt, inner sep=0pt, outer sep=0pt]
\tikzstyle{grayE} =[shorten <=1pt, shorten >=1pt, >=stealth, color=gray!75,
                    line width=1.1pt]
\tikzstyle{grayV}=[circle, draw, minimum size=5pt, color=gray!75,
                    line width=0.75pt, inner sep=0pt, outer sep=0pt]
\tikzstyle{blackV}=[circle, draw, fill=black, minimum size=5pt, inner sep=0pt,
                    outer sep=0pt]

\usepackage{float}

\usepackage{xcolor}

\usepackage{amsmath}
\usepackage{amssymb}

\usepackage{amsthm}

\usepackage[colorlinks=true]{hyperref}

\usepackage[capitalize]{cleveref}

\newtheorem{theorem}{Theorem}
\newtheorem{lemma}[theorem]{Lemma}
\newtheorem{corollary}[theorem]{Corollary}
\newtheorem{proposition}[theorem]{Proposition}

\theoremstyle{definition}
\newtheorem{problem}{Problem}

\usepackage{xspace}

\newcommand{\obs}{%
    \textnormal{obs}
}

\newcommand{\fgc}[1]{%
\textsc{Full} $#1$-\textsc{Colouring}\xspace
}

\title{Full homomorphisms to graph classes%
\thanks{Pavol Hell was supported by his NSERC Canada Discovery Grant. C\'esar
Hern\'andez-Cruz was supported by UNAM-PAPIIT IA106425 and SEP-CONACYT CB
A1-S-8397 grants.}}

\author[1]{Pavol~Hell\thanks{pavol@sfu.ca}}
\author[2]{C\'esar~Hern\'andez-Cruz\thanks{chc@ciencias.unam.mx}}

\affil[1]{School of Computing Science, Simon Fraser University,
          8888 University Drive, Burnaby, B.C., Canada, V5A 1S6}
\affil[2]{Facultad de Ciencias, Universidad Nacional Aut\'onoma de M\'exico,
          Av. Universidad 3000 Circuito Exterior s/n Ciudad Universitaria,
          04510, M\'exico City, M\'exico}

\begin{document}
\date{}

\maketitle
\begin{abstract}
    Given a family of graphs $\mathcal{F}$, we define a graph $G$ to be fully
    $\mathcal{F}$-colourable if $G$ admits a full homomorphism to some $F$ in
    $\mathcal{F}$.   We approach the problem of determining when a graph is
    fully $\mathcal{F}$-colourable in terms of minimal forbidden induced
    subgraphs.   We provide general results which allow to obtain the exact
    families of forbidden induced subgraphs for full $\mathcal{F}$-colouring
    when $\mathcal{F}$ is among some well-known families, such as threshold,
    trivially perfect, split, chordal, interval and strongly chordal graphs, as
    well as forests.

    Traditionally, these questions have been studied for a single graph $H$, not
    a family.   Motivated by our results on the family of forests, we contribute
    to this research by focusing on the case of a single centipede.
\end{abstract}

\section{Introduction}
\label{sec:intro}

We refer the reader to \cite{bondy2008} for standard terminology and notation.
We consider undirected graphs with neither loops nor parallel edges.   For
graphs $G$ and $H$ with disjoint sets of vertices, $G+H$ denotes their union;
the union of $n$ disjoint copies of $G$ is denoted $nG$, and for a family $G_1,
\dots, G_n$ of vertex-disjoint graphs, the union is denoted $\sum_{i=1}^n G_i$.
We use $P_n$ to denote the path graph of order $n$.

In a graph $G$ two vertices $u$ and $v$ are \textit{twins} if they have the same
closed neighbourhoods in $G$; they are \textit{false twins} if they are not
adjacent and they are \textit{true twins} otherwise.   A graph $G$ is
\textit{point-determining} if no vertex of $G$ has a false twin.

If $T$ is a tree and $u$ is a vertex of $T$, then we say that $u$ is a
\textit{support vertex} of $T$ if $u$ is not a leaf and it is adjacent to some
leaf of $T$.   Notice that trees of order less than $3$ do not have support
vertices, but every other tree has at least one support vertex.   A
\textit{caterpillar} $G$ is either $P_2$ or a tree such that the removal of all
leaves yields a path $P$.  The \textit{spine} of a caterpillar $P_2$ is $P_1$
and the spine of any other caterpillar is the path $P$. A \textit{centipede} is
a caterpillar such that every vertex in the spine has exactly one leaf attached
to it.   The \textit{$k$-centipede} is the centipede with spine $P_k$.

A family of graphs $\mathcal{F}$ is \textit{hereditary} if it is closed under
taking induced subgraphs, i.e., if $G \in \mathcal{F}$ implies $H \in
\mathcal{F}$ for every induced subgraph $H$ of $G$.   A \textit{minimal
$\mathcal{F}$-obstruction} is a graph $F$ such that $F$ is not $\mathcal{F}$,
but every proper induced subgraph of $F$ is.   Clearly, every graph that is not
in $\mathcal{F}$ contains a minimal $\mathcal{F}$-obstruction as an induced
subgraph.   Denote the set of all minimal $\mathcal{F}$-obstructions by $\obs
(\mathcal{F})$.   It follows from our previous observation that a graph is in
$\mathcal{F}$ if and only if none of its induced subgraphs is in $\obs
(\mathcal{F})$.   Therefore, if $\obs (\mathcal{F})$ is finite, it can be
checked whether a graph $G$ is in $\mathcal{F}$ in polynomial time by simply
checking each graph in $\obs (\mathcal{F})$ not to be an induced subgraph of
$G$.

Given two graphs $G$ and $H$, a \textit{full homomorphism} from $G$ to $H$, also
called a \textit{full $H$-colouring} of $G$, is a function $\varphi \colon V_G
\to V_H$ such that $uv \in E_G$ if and only if $\varphi (u) \varphi (v) \in
E_H$.   We emphasize that we use the terms full homomorphism to $H$ and full
$H$-colouring interchangeably. If a full $H$-colouring of $G$ exists, we say
that $G$ is \textit{fully $H$-colourable}.   For a fixed graph $H$, the problem
of determining whether an input graph $G$ admits a full $H$-colouring is known
as \fgc{H}.   Feder and Hell \cite{federDM308} (c.f., also \cite{ballEJC31} for
a more general result) proved that for any fixed graph $H$, the family of fully
$H$-colourable graphs has only finitely many minimal obstructions. Thus, it
follows that for each graph $H$ the problem \fgc{H} is polynomial-time solvable.

The study of concrete obstructions to full $H$-colouring, where $H$ comes from
simple families like paths and cycles, was initiated in \cite{ballEJC31} and led
to \cite{guzmanDM347}.   We continue with these investigations.

We begin by observing that if $H$ is a part of the input, i.e., is not a fixed
graph, we obtain an NP-complete problem, even for linear forests.

\begin{proposition}
    The problem of deciding for two linear forests $G$ and $H$ if there is a
    full homomorphism of $G$ to $H$ is NP-complete.    
\end{proposition}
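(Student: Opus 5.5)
The plan is to reduce from \textsc{3-Partition}. It is strongly NP-complete, so its numbers may be written in unary, and paths whose orders equal those numbers then have polynomial total size. Membership in NP is immediate: a full homomorphism is a polynomial-size certificate that can be checked edge by edge.

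The first and main step is a structural lemma describing when a path maps fully into a linear forest. Let $\varphi$ be a full homomorphism from $P_n=u_1\cdots u_n$ with $n\ge 4$ to a linear forest. Consecutive vertices go to adjacent vertices, so the image is a walk inside a single component. I will show that this walk never backtracks. Suppose $\varphi(u_{i-1})=\varphi(u_{i+1})$ for some $i$. Some $u_j$ with $|j-(i-1)|\ge 2$ is adjacent to $u_{i+1}$ (namely $u_{i+2}$, or $u_{i-2}$ if $i+2>n$); such a $j$ exists because $n\ge 4$. Then $\varphi(u_j)$ is adjacent to $\varphi(u_{i+1})=\varphi(u_{i-1})$ while $u_ju_{i-1}\notin E$, which contradicts fullness. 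A non-backtracking walk in a path is injective, so $\varphi$ embeds $P_n$ as a subpath. Conversely, any such embedding is full on a single path.

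Next I handle two components $P,Q$ of $G$, each of order at least $4$. I claim their images are disjoint and non-adjacent. If $x\in P$ and $y\in Q$ satisfy $\varphi(x)=\varphi(y)$, take a neighbour $x'$ of $x$. Then $\varphi(x')$ is adjacent to $\varphi(y)$ although $x'y\notin E$, which is impossible. Adjacent images are excluded directly by fullness. Hence full homomorphisms from such a $G$ to a linear forest $H$ are exactly the placements of the components of $G$ as vertex-disjoint subpaths of $H$ with at least one unused vertex between consecutive placed paths in the same component of $H$.

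For the reduction, take a \textsc{3-Partition} instance $a_1,\dots,a_{3m}$ with target $B$, where $B/4<a_i<B/2$ and $\sum a_i=mB$. Scaling every number by a constant if necessary, I may assume each $a_i\ge 4$. Set $G=\sum_{i=1}^{3m}P_{a_i}$ and $H=mP_{B+2}$.

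If a valid partition exists, I place each triple in its own copy of $P_{B+2}$ with two gap vertices. Conversely, suppose a full homomorphism exists and some host path receives $t$ pieces. Then their total order plus $t-1$ gaps is at most $B+2$. Since each $a_i>B/4$, four pieces would already use more than $B+3$ vertices, so $t\le 3$. There are $3m$ pieces and $m$ hosts, so every host receives exactly three pieces, with sum at most $B+2-2=B$. As the total is $mB$, every triple sums to exactly $B$. The sizes are polynomial in the unary encoding, which completes the reduction.

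The expected obstacle is the structural lemma: one must check carefully that short paths and folding at the ends are excluded, which is exactly why I insist on $a_i\ge4$.
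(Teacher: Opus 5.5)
Your proof is correct and follows the same route as the paper, a reduction from $3$-\textsc{Partition} with $G=\sum_i P_{a_i}$. You also supply what the paper's one-line sketch leaves implicit: the target $H=mP_{B+2}$, the bounds $B/4<a_i<B/2$ that force exactly three pieces per host, and the scaling to $a_i\ge 4$, which makes $G$ point-determining so that every full homomorphism is an induced embedding.

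One small slip in your lemma: when $i+2>n$ the witness $u_{i-2}$ is adjacent to $u_{i-1}$, not to $u_{i+1}$, so the roles of $u_{i-1}$ and $u_{i+1}$ should be swapped. More simply, $\varphi(x)=\varphi(y)$ forces $x$ and $y$ to be false twins, and your $G$ has none.
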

\begin{proof}
    We proceed by a reduction from $3$-\textsc{Partition}, which is known to be
    strongly NP-complete \cite{garey1979}. Recall that an instance of
    $3$-partition is a sequence of $3n$ nonnegative integers $A = (a_1, \dots,
    a_{3n})$ whose sum is $nb$, and the question asked is whether there exists a
    partition of the given integers into $n$ disjoint groups of three such that
    each group sums exactly to $b$. For our reduction, given the sequence $A$ of
    nonnegative integers, construct the input graph $G$ as the linear forest
    with $3n$ components such that the $i$-th component has length $a_i-1$.
\end{proof}

In this paper we focus on extending the problem of \fgc{H} for one graph $H$ to
the problem of full colourability by a family $\mathcal{F}$ of graphs. Let
$\mathcal{F}$ be a fixed hereditary family of graphs; we say that a graph $G$
admits a full $\mathcal{F}$-colouring (is fully $\mathcal{F}$-colourable) if it
is fully $H$-colourable for some $H$ in $\mathcal{F}$. The problem
\fgc{\mathcal{F}} is the problem of deciding if a graph $G$ is fully
$\mathcal{F}$-colourable. So for example a graph is fully split-colourable if it
admits a full homomorphism to some split graph; in some cases we use the same
terminology for graph families that are not hereditary, e.g., a fully
tree-colourable graph is a graph that admits a full homomorphism to a tree.
Depending on the choice of $\mathcal{F}$, this problem may be polynomial-time
solvable, NP-complete or even undecidable, as we illustrate in what follows.

Recall that the
length of a string $w$ is denoted by $\ell(w)$, and define the double ended
$n$-pan to be the graph obtained by $2C_n$ by adding one bridge joining the two
copies of $C_n$.   Let $L$ be a language on $\Sigma = \{0,1\}$. For any $w \in
\Sigma^\ast$, let $S_w$ be the set of positions of $w$ which are $0$ and $T_w$
the set of positions of $w$ which are $1$. Define $G_w$ to be the disjoint union
of the double ended $(\ell(w)+6)$-pan and the graph $\sum_{n \in S_w} C_{n+6} +
\sum_{n \in T_w} \overline{C_{n+6}}$. Let $\mathcal{F}_L$ be the set
$\mathcal{F}_L = \{ G_w \colon\ w \in L \}$.

\begin{proposition}
    If $L$ is an undecidable binary language, then the full
    $\mathcal{F}_L$-colouring problem is undecidable.
\end{proposition}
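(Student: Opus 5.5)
The plan is to reduce membership in $L$ to full $\mathcal{F}_L$-colouring. Given $w \in \Sigma^\ast$, we feed $G_w$ itself to the problem and show that $G_w$ is fully $\mathcal{F}_L$-colourable if and only if $w \in L$. The map $w \mapsto G_w$ is clearly computable. Hence a decision procedure for full $\mathcal{F}_L$-colouring would decide $L$, which is a contradiction. If $w\in L$, the identity is a full homomorphism $G_w \to G_w \in \mathcal{F}_L$. So everything rests on the converse: if $G_w$ admits a full homomorphism to $G_{w'}$, then $w = w'$.

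The key tool is that a full homomorphism $\varphi$ can only identify non-adjacent vertices with equal open neighbourhoods. Indeed, $\varphi(u)=\varphi(v)$ forces $u,v$ to be non-adjacent, since $H$ has no loops. It also forces every neighbour of $u$ to map to a neighbour of $\varphi(v)$, hence to be adjacent to $v$. Thus $\varphi$ factors through identifying false twins. All graphs involved, namely $C_m$ and $\overline{C_m}$ for $m\ge 7$ and the double ended pan, are point-determining, so $\varphi$ is injective. Moreover, injective full homomorphisms are exactly embeddings as induced subgraphs. So the question becomes: when is $G_w$ an induced subgraph of $G_{w'}$?

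Next I would pin down the components. The image of a connected graph under a full homomorphism is connected. For $m\ge 7$, $\overline{C_m}$ is connected, so each component of $G_w$ lands inside a single component of $G_{w'}$.

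1. The double ended $(\ell(w)+6)$-pan has two disjoint induced cycles of length $\ell(w)+6 \ge 7$.
2. Such cycles cannot sit inside any $\overline{C_m}$, because $\overline{C_m}$ contains no induced $C_k$ for $k\ge 5$, since the complement of $C_k$ is not an induced subgraph of $C_m$ for $k \ge 5$.
3. They cannot sit inside a single $C_m$ either.
4. So the pan embeds into the pan of $G_{w'}$, whose only induced cycles are its two $(\ell(w')+6)$-cycles. This gives $\ell(w)=\ell(w')$, and the pan component is used up.

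Each $C_{n+6}$ component of $G_w$ must then map into a cycle component $C_{m+6}$ of $G_{w'}$, because the pan is taken and $\overline{C}$'s contain no long induced cycles. An induced cycle inside a cycle must be the whole cycle, so $n=m$. Similarly, $\overline{C_{n+6}}$ embeds into a component that is not a cycle, which must be some $\overline{C_{m+6}}$. Taking complements, $C_{n+6}$ is an induced subgraph of $C_{m+6}$, so $n=m$. Injectivity then gives $S_w\subseteq S_{w'}$ and $T_w\subseteq T_{w'}$. Since $S_w\cup T_w=\{1,\dots,\ell(w)\}$ and the same holds for $w'$ with the same length, the two strings coincide.

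The main obstacle is the careful justification of the induced-subgraph facts: the absence of long induced cycles in $\overline{C_m}$, and the claim that the pan's two long cycles cannot be packed elsewhere. Handling $\overline{C_7}$-type small cases needs care. The $+6$ offset is exactly what guarantees all cycles have length at least $7$, making these arguments clean. I would also double-check that positions start at $1$, which does not matter for the argument.
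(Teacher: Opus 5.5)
Your proposal is correct and follows the same route as the paper. Both reduce membership in $L$ via $w \mapsto G_w$ and show, by matching components, that a full homomorphism $G_w \to G_{w'}$ forces $w = w'$. Your version is slightly more careful than the paper's in one respect. You use point-determination to make $\varphi$ an induced embedding, and then observe that the pan of $G_w$ exhausts the pan of $G_{w'}$. The paper instead asserts that no full homomorphism exists between cycles and double ended pans, which fails for $C_t$ into the double ended $t$-pan; this case arises when the last letter of $w$ is $0$.
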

\begin{proof}
    
    For an arbitrary $w \in \Sigma^\ast$, let us show that $G_w$ admits a full
    $\mathcal{F}_L$-colouring if and only if $G_w \in \mathcal{F}_L$.   For the
    non-trivial implication, suppose that $G_w$ admits a full $H$-colouring
    $\varphi$ for some $H \in \mathcal{F}_L$.   Clearly, $C_r$ admits a full
    $C_s$-colouring if and only if $r = s$, $\overline{C_r}$ admits a full
    $\overline{C_s}$-colouring if and only if $r = s$, and the double ended
    $n$-pan admits a full homomorphism to the double ended $m$-pan if and only
    if $r = s$. Also, if $r, s$, and $t$ are at least $6$, no full homomorphism
    exists between any of $C_r, \overline{C_s}$, and the double ended $t$-pan.
    Therefore, each component of $G_w$ is mapped under $\varphi$ to a copy of
    itself in $H$, and by construction of $\mathcal{F}_L$, such a copy is a
    connected component of $H$. Finally, since the double ended pan in $G_w$ is
    mapped under $\varphi$ to a copy of the same double ended pan in $H$, the
    number of connected components of $G_w$ coincides with the number of
    connected components of $H$.   Therefore, $G_w$ is isomorphic to $H$.
    
    Finally, we show that \textsc{Full $\mathcal{F}_L$-colouring} is undecidable
    whenever $L$ is undecidable.   Proceeding by contrapositive, assume that an
    algorithm $M$ to decide \textsc{Full $\mathcal{F}_L$-colouring} exists.   We
    propose an algorithm to decide $L$ using $M$ as a subroutine.   With input
    $w$, construct $G_w$, apply $M$ to $G_w$ and return the same output as $M$.
    In the previous paragraph we showed that $G_w$ admits a full
    $\mathcal{F}_L$-homomorphism if and only if $G_w \in \mathcal{F}_L$. But the
    former occurs if and only if $M$ returns \texttt{true} with input $G_w$, and
    the latter occurs if and only if $w \in L$.
\end{proof}

The rest of the article is organized as follows.   In \Cref{sec:main} we compare
the set $\obs(\mathcal{F})$ of minimal obstructions to $\mathcal{F}$ with the
set of minimal obstructions to full $\mathcal{F}$-colouring. This allows us to
conclude in our main result in this section a description of the sets of minimal
obstructions to full $\mathcal{F}$-colouring for the well-known families
$\mathcal{F}$ of chordal graphs, strongly chordal graphs, interval graphs, split
graphs, threshold graphs, quasi-threshold graphs and forests. In \Cref{sec:algo}
we use the characterization of fully threshold-colourable graphs obtained in the
previous section to propose a certifying algorithm to recognize this family of
graphs in time $O(|V|+|E|)$ using elementary data structures. We analyze the
structure of the minimal obstructions to full $H$-colouring when $H$ is a single
centipede and derive the minimal obstructions to the family of all centipedes in
\Cref{sec:centipedes}.   Possible future research directions are presented in
\Cref{sec:conclusions}.

\section{Minimal obstructions to full
\texorpdfstring{$\mathcal{F}$}{F}-colouring}
\label{sec:main}

Let $\mathcal{F}$ be a hereditary class of graphs.  Clearly, every graph in
$\mathcal{F}$ is fully $\mathcal{F}$-colourable via the identity mapping. Since
the converse of the previous statement is not true, it is natural to ask how the
complexity of \fgc{\mathcal{F}} is related to the complexity of the recognition
problem for $\mathcal{F}$; let us call the latter
$\mathcal{F}$-\textsc{Recognition}. There is a simple polynomial reduction from
\fgc{\mathcal{F}} to $\mathcal{F}$-\textsc{Recognition}.  If $G'$ is obtained
from an input graph $G$ for \fgc{\mathcal{F}} by removing all false twins, then
clearly $G$ is fully $\mathcal{F}$-colourable if and only if $G'$ is in
$\mathcal{F}$.   The opposite direction yields an interesting problem which we
state formally in \Cref{sec:conclusions}.

Let us consider a simple example.   When $\mathcal{K}$ is the family of complete
graphs we have $\obs(\mathcal{K}) = \{ 2K_1 \}$.   If $G$ admits a full
homomorphism to $K_r$, then $G$ is a complete $r$-partite graph.   Therefore,
the set of minimal obstructions to full $\mathcal{K}$-colouring is $\{ K_1 + K_2
\}$.   In this example, the unique graph in $\obs(\mathcal{K})$ is not a minimal
obstruction to full $\mathcal{K}$-colouring, but it is an induced subgraph of
the unique minimal obstruction to full $\mathcal{K}$-colouring.   It is natural
to ponder two questions: What caused $2K_1$ not to be a minimal obstruction to
full $\mathcal{K}$-colouring?  Is it true for any hereditary family
$\mathcal{F}$ that a graph in $\obs(\mathcal{F})$ which is not a minimal
obstruction to full $\mathcal{F}$-colouring must be an induced subgraph of such
an obstruction?

The following simple facts answer the questions posed in the previous paragraph.
The proofs are easy and thus omitted.

\begin{proposition}
\label{pro:fullFObs}
    Let $\mathcal{F}$ be a hereditary family of graphs.
    \begin{enumerate}
        \item Every point-determining obstruction to $\mathcal{F}$ is also an
            obstruction to full $\mathcal{F}$-colouring.
        
        \item Any minimal obstruction to full $\mathcal{F}$-colouring contains a
            graph in $\obs(\mathcal{F})$ as an induced subgraph.
    
        \item If $G$ is a fully $\mathcal{F}$-colourable graph, then each of its
            point-determining induced subgraphs is in $\mathcal{F}$.
    \end{enumerate}
    \hfill $\square$
\end{proposition}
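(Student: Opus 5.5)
The whole argument rests on one lemma about point-determining graphs: \emph{if $\varphi\colon G\to H$ is a full homomorphism and $G$ is point-determining, then $\varphi$ is injective and $G$ is isomorphic to the induced subgraph $H[\varphi(V_G)]$.} We prove this lemma first and then derive the three items in the order (3), (1), (2).

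For the lemma, suppose $\varphi(u)=\varphi(v)$ with $u\neq v$. Since $H$ has no loops, $\varphi(u)\varphi(v)\notin E_H$, so by fullness $uv\notin E_G$. For any other vertex $w$ we have
\[
uw\in E_G \iff \varphi(u)\varphi(w)\in E_H \iff \varphi(v)\varphi(w)\in E_H \iff vw\in E_G .
\]
Hence $u$ and $v$ are non-adjacent vertices with the same neighbourhoods, i.e. false twins, contradicting point-determinacy. So $\varphi$ is injective. Fullness then says that $\varphi$ preserves both adjacency and non-adjacency, so it is an isomorphism onto $H[\varphi(V_G)]$.

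Item (3) follows at once. Let $G'$ be a point-determining induced subgraph of $G$ and let $\varphi\colon G\to H$ be a full homomorphism with $H\in\mathcal{F}$. The restriction of $\varphi$ to $G'$ is again a full homomorphism. By the lemma, $G'$ is isomorphic to an induced subgraph of $H$, and since $\mathcal{F}$ is hereditary, $G'\in\mathcal{F}$.

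Item (1) is the contrapositive of the case $G'=G$ in (3). If $F$ is a point-determining graph not in $\mathcal{F}$ and $F$ were fully $\mathcal{F}$-colourable, then (3) applied to $F$ itself would give $F\in\mathcal{F}$, a contradiction. Hence $F$ is not fully $\mathcal{F}$-colourable. If the intended reading is that such an $F$ is a \emph{minimal} obstruction whenever $F\in\obs(\mathcal{F})$, this also follows: every proper induced subgraph of $F$ lies in $\mathcal{F}$ and is therefore fully $\mathcal{F}$-colourable via the identity map.

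For item (2), let $G$ be a minimal obstruction to full $\mathcal{F}$-colouring. Every graph in $\mathcal{F}$ is fully $\mathcal{F}$-colourable via the identity, so $G\notin\mathcal{F}$. As observed in the introduction, every graph outside a hereditary family contains a member of $\obs(\mathcal{F})$ as an induced subgraph, which proves (2).

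The main obstacle is the lemma, specifically checking that equal images force equal neighbourhoods. This is exactly where the absence of loops in $H$ is needed: it guarantees that $u$ and $v$ are non-adjacent, so they are false twins rather than true twins. Everything else is routine bookkeeping with heredity.
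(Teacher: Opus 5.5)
Your proof is correct and is essentially the argument the paper leaves implicit: the paper omits the proof as easy, but the key fact it uses in the proof of \Cref{pro:point-det} (each colour class of a full homomorphism consists of pairwise false twins) is exactly your lemma, and items (3), (1) and (2) then follow by heredity and the identity colouring as you describe. You were also right to verify minimality in item (1), since the paper later uses that stronger form in the proof of \Cref{lem:C4}: point-determining graphs in $\obs(\mathcal{F})$ are \emph{minimal} obstructions to full $\mathcal{F}$-colouring.
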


Given a hereditary class of graphs $\mathcal{F}$, there are some properties of
the graphs in $\mathcal{F}$ that can be deducted from the minimal $\mathcal{F}$
obstructions. For example, $\mathcal{F}$ is additive if and only if all graphs
in $\obs({\mathcal{F}})$ are connected, or $\mathcal{F}$ is closed under taking
complements if and only if $\obs(\mathcal{F})$ is.   Our next result follows
this line of thought, and relates it to full homomorphisms.

\begin{proposition}
\label{pro:point-det}
    Let $\mathcal{F}$ be a hereditary property of graphs.   The following
    statements are equivalent.
    \begin{enumerate}
        \item Every graph in $\obs(\mathcal{F})$ is point-determining.
        \item $\mathcal{F}$ is closed under vertex duplication.
        \item A graph $G$ is fully $\mathcal{F}$-colourable if and only if it is
            in $\mathcal{F}$.
    \end{enumerate}
\end{proposition}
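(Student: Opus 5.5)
I will prove the cycle of implications (1) $\Rightarrow$ (2) $\Rightarrow$ (3) $\Rightarrow$ (1). Throughout, ``vertex duplication'' means adding a false twin: a new vertex $v'$ with $N(v') = N(v)$ and $vv'$ not an edge. This is the operation that reverses the collapsing of false twins in a full homomorphism.

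For (1) $\Rightarrow$ (2), I argue by contradiction. Let $G \in \mathcal{F}$ and let $G'$ be obtained by duplicating a vertex $v$ into $v'$, and suppose $G' \notin \mathcal{F}$. Then $G'$ contains an induced $F \in \obs(\mathcal{F})$. Since $G' - v' \cong G$ and $G' - v \cong G$ both lie in $\mathcal{F}$ (and $\mathcal{F}$ is hereditary), $F$ must contain both $v$ and $v'$. But then $v$ and $v'$ are false twins in $F$ as well, because in an induced subgraph they keep equal neighbourhoods and remain nonadjacent. This contradicts that $F$ is point-determining.

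For (2) $\Rightarrow$ (3), the ``if'' direction is the identity map. For ``only if'', let $\varphi\colon G \to H$ be a full homomorphism with $H \in \mathcal{F}$. I would show $G$ is an induced subgraph of a graph obtained from $H$ by repeated duplication. Each fibre $\varphi^{-1}(h)$ is an independent set, since $H$ has no loops. Two vertices in the same fibre have identical neighbourhoods, namely the preimage of $N_H(h)$. Hence $G$ is isomorphic to the induced subgraph of the blow-up $H'$ obtained by duplicating each $h$ exactly $|\varphi^{-1}(h)|-1$ times (possibly zero), after deleting the vertices $h$ with empty fibres. By (2), $H' \in \mathcal{F}$, and by heredity $G \in \mathcal{F}$.

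For (3) $\Rightarrow$ (1), suppose $F \in \obs(\mathcal{F})$ has a false twin pair $u,v$. The map sending $u$ to $v$ and fixing all other vertices is a full homomorphism from $F$ onto $F - u$. Since $F$ is minimal, $F - u \in \mathcal{F}$, so $F$ is fully $\mathcal{F}$-colourable. By (3) this gives $F \in \mathcal{F}$, a contradiction.

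I expect the only delicate point to be the bookkeeping in (2) $\Rightarrow$ (3): I must check that fibres are independent with equal neighbourhoods, so that $G$ embeds as an induced subgraph of an iterated duplication of $H$. I also need to handle non-surjective $\varphi$, which is done via heredity.
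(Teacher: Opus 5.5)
Your proof is correct and follows the same cycle of implications as the paper. For (1)$\Rightarrow$(2), an obstruction in the duplicated graph must contain both twins; for (2)$\Rightarrow$(3), the fibres of $\varphi$ are classes of false twins, so $G$ arises from $H$ by duplication; for (3)$\Rightarrow$(1), you collapse a false-twin pair $u,v$ onto $F-u$. Your treatment of (2)$\Rightarrow$(3) is slightly more careful than the paper's, which claims an injective full homomorphism makes $G$ isomorphic to $H$, whereas you handle non-surjective $\varphi$ by passing to an induced subgraph via heredity.
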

\begin{proof}
    Suppose that every graph in $\obs(\mathcal{F})$ is point-determining.   If
    $G$ is a graph in $\mathcal{F}$, then duplicating any vertex in $G$ cannot
    create a subgraph of $G$ which is in $\obs(\mathcal{F})$, so the resulting
    graph is also in $\mathcal{F}$.

    To prove that the second item implies the third one, suppose that
    $\mathcal{F}$ is closed under vertex duplication and let $G$ be a fully
    $H$-colourable graph for some $H$ in $\mathcal{F}$.   If the full
    homomorphism is injective, then $G$ is isomorphic to $H$, and hence, $G$ is
    in $\mathcal{F}$.   Otherwise, all vertices in each colour class of the full
    $H$-colouring of $G$ are false twins, so $G$ is obtained from $H$ by
    duplicating vertices.   As $\mathcal{F}$ is closed under vertex duplication,
    we conclude that $G$ is in $\mathcal{F}$.   The remaining implication of the
    third item is straightforward to verify.

    To prove that the last item implies the first one, we proceed by
    contrapositive.   Let $F$ be a graph in $\obs(\mathcal{F})$ which is not
    point-determining, and let $u$ and $v$ be a pair of false twins in $F$.
    Clearly, $F-u$ is in $\mathcal{F}$, and because $F$ is fully
    $(F-u)$-colourable, we conclude that $F$ is fully $\mathcal{F}$-colourable.
    We have found a graph which is not in $\mathcal{F}$ but admits a full
    homomorphism to a graph in $\mathcal{F}$, which concludes the proof.
\end{proof}

It is worth noting that for many classic hereditary classes the first condition
of \Cref{pro:point-det} is easily verifiable to be true.   Recall that a graph
is a \textit{bipartite chain graph} if and only if it is $\{ C_3, 2K_2, C_5
\}$-free \cite{mahadev1995}.   It follows from the Strong Perfect Graph Theorem
\cite{chudnovskyAM164} that a graph is perfect if and only if it is free from
holes and odd-holes.   Possibly the best known characterization for cographs
states that the only minimal obstruction for the class of cographs is $P_4$
\cite{corneilDAM3}.  It is easy to verify that odd holes, odd antiholes, $C_3$,
$2K_2$ and $P_4$ are point-determining.   A tedious but straightforward
exploration yields a similar observation for the minimal obstructions to the
classes of permutation graphs, comparability graphs and distance hereditary
graphs.   Thus, \Cref{pro:point-det} implies the following result.

\begin{proposition}
    For the following families $\mathcal{F}$, the set of minimal obstructions to
    full $\mathcal{F}$-colouring is the same as the set of minimal obstructions
    for the family $\mathcal{F}$, i.e., $\obs(\mathcal{F})$.
    \begin{enumerate}
        \item Bipartite chain graphs.
        \item Cographs.
        \item Distance hereditary graphs.
        \item Permutation graphs.
        \item Comparability graphs.
        \item Perfect graphs.
    \end{enumerate}
\end{proposition}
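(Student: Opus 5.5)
By \Cref{pro:point-det}, it suffices to check, for each listed family $\mathcal{F}$, that every graph in $\obs(\mathcal{F})$ is point-determining. Equivalently, we may check that $\mathcal{F}$ is closed under vertex duplication, i.e., under adding a false twin. The implication from the first item to the third then gives that a graph is fully $\mathcal{F}$-colourable exactly when it lies in $\mathcal{F}$. Hence the minimal obstructions to full $\mathcal{F}$-colouring coincide with $\obs(\mathcal{F})$. I would treat each family by whichever of the two equivalent conditions is more convenient.

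For the first three families I would inspect the obstruction sets directly.
\begin{itemize}
\item \emph{Bipartite chain graphs}: $\obs=\{C_3,2K_2,C_5\}$. In each of these graphs, distinct vertices have distinct open neighbourhoods, so none has false twins.
\item \emph{Cographs}: $\obs=\{P_4\}$. The four vertices of $P_4$ have pairwise distinct neighbourhoods.
\item \emph{Perfect graphs}: by the Strong Perfect Graph Theorem, the obstructions are the odd holes $C_{2k+1}$ with $k\ge2$ and their complements. In a cycle of length at least $5$, two nonadjacent vertices never share both neighbours. Complementation swaps false twins with true twins, so I would also verify that odd antiholes have no false twins, i.e., that $C_{2k+1}$ has no true twins. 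This holds since adjacent vertices of a cycle of length at least $4$ have different closed neighbourhoods.
\end{itemize}

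For the remaining families, where the obstruction lists are long or infinite, I would instead prove closure under duplication structurally.
\begin{itemize}
\item \emph{Comparability graphs}: given a transitive orientation of $G$, give the new copy $v'$ of $v$ the same in- and out-arcs as $v$. No arc joins $v$ and $v'$, and any directed path $x\to v'\to y$ mirrors $x\to v\to y$, so transitivity is preserved.
\item \emph{Permutation graphs}: these are the graphs that are both comparability and co-comparability graphs. Adding a false twin to $G$ is the same as adding a true twin to $\overline{G}$, which amounts to duplicating a vertex in a transitive orientation and joining the copy to the original by an arc placed consistently, say $v\to v'$ inserted immediately after $v$ in a linear extension. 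Equivalently, in the permutation diagram one places the segment for $v'$ parallel and adjacent to that of $v$ so that the two do not cross.
\item \emph{Distance hereditary graphs}: these are generated from $K_1$ by pendant vertices and twins (Bandelt--Mulder). Adding a false twin is one of the generating operations, so closure is immediate.
\end{itemize}

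The main obstacle is the permutation case. There one must make sure the duplicated vertex does not inadvertently create or destroy crossings with other segments. Placing the endpoints of $v'$ immediately next to those of $v$, in the same relative order on both lines, handles this: $v'$ then crosses exactly the segments that $v$ crosses, and does not cross $v$.
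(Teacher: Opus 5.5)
Your argument is correct. For bipartite chain graphs, cographs and perfect graphs it coincides with the paper's. You check that $C_3$, $2K_2$, $C_5$, $P_4$, the odd holes and the odd antiholes are point-determining, and then apply \Cref{pro:point-det}. Your remark that complementation exchanges false and true twins is the right way to handle the antiholes.

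For comparability, permutation and distance-hereditary graphs you take a genuinely different route. The paper stays with the first condition of \Cref{pro:point-det}. It only asserts that a ``tedious but straightforward exploration'' of the known obstruction lists shows they are point-determining. You instead use the equivalent second condition and prove closure under adding a false twin directly:
\begin{itemize}
\item for comparability graphs, by extending a transitive orientation to the new vertex;
\item for permutation graphs, by drawing a non-crossing segment next to that of $v$ in the permutation diagram;
\item for distance-hereditary graphs, by the Bandelt--Mulder generation.
\end{itemize}

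What your route buys is a short, self-contained proof. It never needs Gallai's infinite list of minimal non-comparability graphs or the corresponding list for permutation graphs, which is exactly where the paper's exploration would be laborious. (For distance-hereditary graphs the obstruction list of holes, $B$, gem and house is easy to check either way.) What the paper's route buys is uniformity: all six classes are handled by the same obstruction-level criterion that organizes \Cref{sec:main}.

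One thing to tighten is the co-comparability version of the permutation case. State explicitly that $G+v'$ is itself a comparability graph by your previous item, so that both halves of the characterization hold. The remark about a linear extension is unnecessary, and your diagram argument already settles that case on its own.
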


So, if we have a good understanding of $\mathcal{F}$, then \textsc{Full
$\mathcal{F}$-colouring} is an interesting problem only when $\obs(\mathcal{F})$
contains graphs that are not point-determining.   A case having a number of
appealing examples is when the only non-point-determining graph in
$\obs(\mathcal{F})$ is $C_4$.   Let $F_{C_4}$ be the set containing the graphs
depicted in \Cref{fig:FC4}, this is,
\[
    F_{C_4} = \{A, B, \textnormal{house}, \overline{X_{170}}, \overline{2P_3}\}.
\]

\begin{figure}[ht!]
    \centering
    \begin{tikzpicture}
    
    \begin{scope}[xshift=0cm, yshift=-0.35cm]
    \foreach \i in {0,...,3}
        \node [vertex] (\i) at ({360/4*\i + 45}:0.7){};
    \node [vertex] (r) at (0,1.3){};
    
    \foreach \i in {0,...,3}
        \draw let \n1={int(mod(\i+1,4))} in [edge] (\i) to (\n1);
    
    \draw [edge] (r) to (0);
    \draw [edge] (r) to (1);

    \node [rectangle] (n) at (0,-0.95){house};
    \end{scope}
    
    \begin{scope}[xshift=2cm]
    \foreach \i in {-1,0,1}{
        \node [vertex] (\i-0) at (0,{0.85*\i}){};
        \node [vertex] (\i-1) at (0.85,{0.85*\i}){};
    }
    
    \foreach \i in {0,1}{
        \draw [edge] (\i-0) to (\i-1);
        \draw let \n1={int(\i-1)} in [edge] (\i-0) to (\n1-0);
        \draw let \n1={int(\i-1)} in [edge] (\i-1) to (\n1-1);
    }

    \node [rectangle] (n) at (0.4,-1.25){$A$};
    \end{scope}

    \begin{scope}[xshift=4cm, yshift=0cm]
    \foreach \i in {-1,0,1}{
        \node [vertex] (\i-0) at (0,{0.85*\i}){};
        \node [vertex] (\i-1) at (0.85,{0.85*\i}){};
    }
    
    \foreach \i in {0,1}{
        \draw [edge] (\i-0) to (\i-1);
        \draw let \n1={int(\i-1)} in [edge] (\i-0) to (\n1-0);
        \draw let \n1={int(\i-1)} in [edge] (\i-1) to (\n1-1);
    }

    \draw [edge] (-1-0) to (-1-1);

    \node [rectangle] (n) at (0.4,-1.25){$B$};
    \end{scope}

    \begin{scope}[xshift=7.5cm]
        \foreach \i in {0,...,3}
            \node [vertex] (\i) at ({360/4*\i}:0.9){};
        \node [vertex] (in) at (0,0){};
        \node [vertex] (out) at (-1.75,0){};
        
        \foreach \i in {0,...,3}
            \draw let \n1={int(mod(\i+1,4))} in [edge] (\i) to (\n1);
        \foreach \i in {0,1,2}
            \draw [edge] (\i) to (in);
        \draw [edge] (2) to (out);
        
        \node [rectangle] (n) at (0,-1.25){$\overline{X_{170}}$};
    \end{scope}

    \begin{scope}[xshift=10cm]
        \foreach \i in {0,...,3}
            \node [vertex] (\i) at ({360/4*\i}:0.9){};
        \node [vertex] (l) at (-0.2,0.2){};
        \node [vertex] (r) at (0.2,-0.2){};
        
        \foreach \i in {0,...,3}
            \draw let \n1={int(mod(\i+1,4))} in [edge] (\i) to (\n1);
        \foreach \i in {0,1,3,l}
            \draw [edge] (\i) to (r);
        \foreach \i in {0,1,2}
            \draw [edge] (\i) to (l);

        \node [rectangle] (n) at (0,-1.25){$\overline{2P_3}$};
    \end{scope}
    
    \end{tikzpicture}
    \caption{The family $F_{C_4}$.}
    \label{fig:FC4}
\end{figure}

\begin{lemma}
\label{lem:C4}
    Let $\mathcal{F}$ be a hereditary property.   If $C_4$ is the only graph in
    $\obs(\mathcal{F})$ which is not point-determining, then the set of minimal
    obstructions to full $\mathcal{F}$-colouring is obtained from the union of
    $\obs(\mathcal{F}) \setminus \{ C_4 \}$ and $F_{C_4}$ by removing graphs
    from the latter which are not $(\obs(\mathcal{F}) \setminus \{ C_4
    \})$-free.
\end{lemma}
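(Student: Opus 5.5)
The proof has two parts. Let $\mathcal{O}$ denote the claimed set: $\obs(\mathcal{F})\setminus\{C_4\}$ together with those members of $F_{C_4}$ that are $(\obs(\mathcal{F})\setminus\{C_4\})$-free. The first part shows that a graph $G$ is fully $\mathcal{F}$-colourable if and only if it contains no graph of $(\obs(\mathcal{F})\setminus\{C_4\})\cup F_{C_4}$ as an induced subgraph. The second part shows that the minimal such obstructions are exactly $\mathcal{O}$.

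The key device is the point-determining reduction $G'$, obtained by deleting false twins until none remain. As noted after \Cref{pro:point-det}, $G$ is fully $\mathcal{F}$-colourable if and only if $G'\in\mathcal{F}$. The main structural claim is then:
\begin{quote}
$G'$ contains an induced $C_4$ if and only if $G$ contains an induced member of $F_{C_4}$.
\end{quote}

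\emph{Easy direction.} Each graph in $F_{C_4}$ is point-determining (a direct check) and contains an induced $C_4$. So if $H\in F_{C_4}$ is induced in $G$, then $H$ survives in $G'$: we may delete false twins without ever removing two vertices of the same twin class, and distinct vertices of $H$ lie in distinct classes because $H$ is point-determining. Hence $G'$ contains $C_4$.

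\emph{Hard direction.} Take an induced $C_4=abcd$ in $G'$. Here $a,c$ are non-adjacent with common neighbours $b,d$, so since $G'$ is point-determining there is a vertex $x$ distinguishing $a$ from $c$. Likewise there is a vertex $y$ distinguishing $b$ from $d$. Up to symmetry, $x$ is adjacent to $a$ and not to $c$, and its adjacency to $b,d$ gives three cases (none, one, both). The same holds for $y$, and then we also track whether $x=y$ and whether $x\sim y$. The case analysis should produce exactly the five graphs:
\begin{itemize}
\item the house, when $x$ is adjacent to $a$ and $b$ only, and symmetric variants;
\item $\overline{X_{170}}$, when $x$ is adjacent to $a,b,d$ and another vertex is pendant at $c$;
\item $A$ (the domino) and $B$;
\item $\overline{2P_3}$.
\end{itemize}
We need the full $6$-vertex configuration only when neither $x$ nor $y$ alone yields a house. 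I expect this exhaustive case check to be the main obstacle. I would organise it by the number of $C_4$-vertices adjacent to $x$, repeatedly replacing $abcd$ by a new induced $C_4$ to reduce the cases.

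With the claim in hand, characterise $G$. It is fully $\mathcal{F}$-colourable iff $G'$ avoids $\obs(\mathcal{F})$. For a point-determining $F\in\obs(\mathcal{F})\setminus\{C_4\}$, containment in $G'$ is equivalent to containment in $G$ by the same twin argument. Containment of $C_4$ in $G'$ is handled by the claim.

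For the minimality part, first the inclusions. Every $F\in\obs(\mathcal{F})\setminus\{C_4\}$ is point-determining, hence an obstruction by \Cref{pro:fullFObs}. Its proper induced subgraphs lie in $\mathcal{F}$, so they are fully colourable, and therefore $F$ is minimal. A member $H\in F_{C_4}$ is not fully colourable, since $H'=H\supseteq C_4$. If $H$ is $(\obs(\mathcal{F})\setminus\{C_4\})$-free, it remains to show that each proper induced subgraph $H-v$ is fully colourable. I would check directly that every one-vertex deletion of each of the five graphs has no induced $C_4$ after removing false twins; in particular no graph of $F_{C_4}$ contains another. Together with freeness from the other obstructions, the characterisation above then gives minimality.

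Conversely, take any minimal obstruction $M$. By the characterisation it contains some graph in $(\obs(\mathcal{F})\setminus\{C_4\})\cup F_{C_4}$, and by minimality it equals that graph. It therefore equals a member of $\obs(\mathcal{F})\setminus\{C_4\}$, or a member of $F_{C_4}$ that contains no other obstruction. Such a member is $(\obs(\mathcal{F})\setminus\{C_4\})$-free, since it cannot properly contain a graph of $\obs(\mathcal{F})\setminus\{C_4\}$ and is not itself in $\obs(\mathcal{F})$, because it contains $C_4$ properly. Hence $M\in\mathcal{O}$.
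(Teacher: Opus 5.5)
Your framework is sound, and it reorganises the paper's argument rather than replacing it. The paper takes a minimal obstruction $F$ containing $C_4$, uses that $F$ is point-determining, and lets minimality collapse $F$ to the house whenever one appears. You instead prove the full characterisation first: via $G'$, you show that $G'$ has an induced $C_4$ if and only if $G$ contains a member of $F_{C_4}$, and then you read off the minimal members. This buys a little more. You get the forbidden-subgraph characterisation outright, and you make explicit the minimality check for members of $F_{C_4}$, which the paper calls clear. For example, $A$ minus a pendant vertex still contains an induced $C_4$. It is fully colourable only because that $C_4$ collapses once twins are removed, and this is exactly what your one-vertex-deletion check verifies. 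Your twin-preservation argument and the final extraction of the minimal obstructions are correct.

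The gap is that the hard direction of your key claim, which is the whole combinatorial content of the lemma, is only asserted. Moreover, the one configuration you spell out is wrong. Suppose $N_C(x)=\{a,b,d\}$ and another vertex is pendant at $c$. Then $b$ and $d$ are false twins in that $6$-vertex graph, so it is not $\overline{X_{170}}$. Nor can that vertex be your $y$, since it does not separate $b$ from $d$. Also, $B$ is the domino, while $A$ is $C_4$ with non-adjacent pendant vertices at two adjacent vertices.

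The check is short and needs no iterated replacement of the $C_4$.
\begin{itemize}
\item We have $N_C(x)\in\{\{a\},\{a,b\},\{a,d\},\{a,b,d\}\}$ and $N_C(y)\in\{\{b\},\{a,b\},\{b,c\},\{a,b,c\}\}$.
\item If either set has two elements, you get a house on $\{a,b,c,d\}$ together with $x$ or $y$. Otherwise $x\neq y$.
\item The pair $(\{a\},\{b\})$ gives $A$ if $x\not\sim y$ and $B$ if $x\sim y$.
\item The pair $(\{a\},\{a,b,c\})$ gives $\overline{X_{170}}$ if $x\not\sim y$, and a house on $\{x,y,a,c,d\}$ if $x\sim y$. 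The pair $(\{a,b,d\},\{b\})$ is symmetric.
\item The pair $(\{a,b,d\},\{a,b,c\})$ gives $\overline{2P_3}$ if $x\sim y$, since the complement consists of the two paths $a,c,x$ and $b,d,y$. It gives a house on $\{x,y,b,c,d\}$ if $x\not\sim y$.
\end{itemize}
With this case check filled in, your proof is complete.
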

\begin{proof}
    It is easy to observe that graphs in $\obs(\mathcal{F}) \setminus \{C_4\}$
    are minimal obstructions to full $\mathcal{F}$-colouring.   Every graph in
    $\mathcal{F}$ is fully $\mathcal{F}$-colourable, so minimal obstructions to
    full $\mathcal{F}$-colouring not in $\obs(\mathcal{F})$ must contain an
    induced copy of $C_4$.   Since $C_4$ is in $\obs(\mathcal{F})$, in any full
    $\mathcal{F}$-colouring of a graph containing an induced copy of $C_4$, such
    a copy must be mapped to a $P_2$ or a $P_3$ in the target graph.   From
    here, and because of their point-determination, it is routine to verify that
    none of the graphs in $F_{C_4}$ is fully $\mathcal{F}$-colourable.
    Moreover, also because of their point-determination, it is clear that unless
    they properly contain a graph in $\obs(\mathcal{F})$ as an induced subgraph,
    they are minimal obstructions to full $\mathcal{F}$-colouring.   So, let us
    verify that these are the only possibilities to extend our family of minimal
    obstructions to full $\mathcal{F}$-colouring.
    
    Let $F$ be a minimal obstruction to full $\mathcal{F}$-colouring having $C$
    as an induced copy of $C_4$ with $C = (u_1, u_2, u_3, u_4, u_1)$.  As we
    observed earlier, minimal obstructions to full $\mathcal{F}$-colouring must
    be point-determining, and hence, we assume without loss of generality that
    $u_1$ has a neighbour $v$ which is not a neighbour of $u_3$, and $u_2$ has a
    neighbour $w$ which is not a neighbour of $u_4$.   If $v = w$, then $F$ is
    isomorphic to the house.   Else, call $N_C(v)$ and $N_C(u)$ the intersection
    of $V_C$ with the neighbourhoods of $v$ and $u$, respectively.  We have that
    $N_C(v)$ is either $\{ u_1 \}$ or $\{ u_1, u_2, u_4 \}$, and $N_C(w)$ is
    either $\{ u_2 \}$ or $\{ u_1, u_2, u_3 \}$.

    If $N_C(v) = \{ u_1 \}$ and $N_C(w) = \{ u_2 \}$,  then we have that either
    $v$ is adjacent to $w$, and hence $F$ is isomporphic to $B$, or $v$ is not
    adjacent to $w$, and thus $F$ is isomorphic to $A$.   If $N_C(v) = \{ u_1
    \}$ and $N_C(w) = \{ u_1, u_2, u_3 \}$, then $v$ is not adjacent to $u$ (as
    otherwise $F$ would properly contain the house, see \Cref{fig:split-proof}),
    and thus $F$ is isomorphic to $\overline{X_{170}}$. An analogous argument
    applies if $N_C(v) = \{ u_1, u_2, u_4 \}$ and $N_C(w) = \{ u_2 \}$.   The
    last case is when $N_C(v) = \{ u_1, u_2, u_4 \}$ and $N_C(w) = \{ u_1, u_2,
    u_3 \}$.   Notice that $u$ is adjacent to $w$, as otherwise $F$ would
    properly contain the house (see \Cref{fig:split-proof}). Thus, $F$ is
    isomorphic to $\overline{2P_3}$.
\end{proof}

\begin{figure}[ht!]
    \centering
    \begin{tikzpicture}

    \begin{scope}[xshift=3cm]
        \node [blackV] (0) at (0:0.9){};
        \node [vertex] (1) at (90:0.9){};
        \node [blackV] (2) at (180:0.9){};
        \node [blackV] (3) at (270:0.9){};
        \node [blackV] (d) at (0,-0.3){};
        \node [blackV] (u) at (0,0.3){};
        
        \foreach \i in {2,3}
            \draw let \n1={int(mod(\i+1,4))} in [edge] (\i) to (\n1);
        \foreach \i in {0,2,d}
            \draw [edge] (\i) to (u);
        \foreach \i in {0,2,u}
            \draw [grayE] (\i) to (1);
        \draw [edge] (2) to (d);
    \end{scope}
    
    \begin{scope}[xshift=7cm]
        \node [blackV] (0) at (0:0.9){};
        \node [vertex] (1) at (90:0.9){};
        \node [blackV] (2) at (180:0.9){};
        \node [blackV] (3) at (270:0.9){};
        \node [blackV] (in) at (0,0){};
        \node [blackV] (out) at (-1.75,0){};
        
        \foreach \i in {2,3}
            \draw let \n1={int(mod(\i+1,4))} in [edge] (\i) to (\n1);
        \foreach \i in {0,2}
            \draw [edge] (\i) to (in);
        \foreach \i in {2,3}
            \draw [edge] (\i) to (out);
        \foreach \i in {in, out, 0, 2}
            \draw [grayE] (1) to (\i);
    \end{scope}
    
    \end{tikzpicture}
    \caption{Non-minimal cases in the proof of \Cref{lem:C4}.}
    \label{fig:split-proof}
\end{figure}

In the spirit of \Cref{lem:C4}, we now consider other graphs that are not
point-determining as additional minimal obstructions. Observe that false twins
in a forest are either a pair of isolated vertices or leaves adjacent to the
same support vertex. For a tree $T$ which is not point-determining, we define a
point-determining tree $T^p$ which is obtained from $T$ by considering each set
of twin vertices $u_1, \dots, u_k$ with common neighbour $u$ and subdividing the
edges $uu_i$ for each $i \in \{u_1, \dots, u_{k-1}\}$.   For example,
$K_{1,3}^p$ is the graph $E$.   Let $\mathcal{F}$ be a hereditary family of
forests, and let $\obs(\mathcal{F})$ be the family of its minimal obstructions.
Define $F_\mathcal{F}$ to be the family obtained from $\obs(\mathcal{F})$ by
substituting each non-point-determining component $T$ by $T^p$; also include in
$F_{\mathcal{F}}$ the graphs $A$ and $B$ and every cycle of length different
than $4$.   If graphs in $\obs(\mathcal{F})$ have at most one isolated vertex,
then each element of $F_{\mathcal{F}}$ is point-determining and contains a
member of $\obs(\mathcal{F})$ as an induced subgraph. Thus, by
\Cref{pro:fullFObs,pro:point-det}, if $G$ admits a full $\mathcal{F}$-colouring,
then it is $F_\mathcal{F}$-free. This is in fact an equivalence.

\begin{proposition}
\label{pro:tree-obstructions}
    Let $\mathcal{F}$ be a hereditary family of forests such that graphs in
    $\obs(\mathcal{F})$ have at most one isolated vertex.   A graph $G$ admits a
    full $\mathcal{F}$-colouring if and only if it is $F_\mathcal{F}$-free.
\end{proposition}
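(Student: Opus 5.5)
The forward implication is already recorded in the discussion preceding the statement: each member of $F_\mathcal{F}$ is point-determining and contains a graph of $\obs(\mathcal{F})$, so by \Cref{pro:fullFObs} no fully $\mathcal{F}$-colourable graph contains it. Hence the whole proof concerns the converse. Let $G$ be $F_\mathcal{F}$-free, and let $G'$ be the graph obtained from $G$ by identifying each class of false twins into one vertex. Then $G'$ is an induced subgraph of $G$, it is point-determining, and $G$ admits a full homomorphism to $G'$. It is therefore enough to show that $G' \in \mathcal{F}$. Since $G'$ is an induced subgraph of $G$, it is also $F_\mathcal{F}$-free.

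The first step is to show that $G'$ is a forest. It contains no cycle of length other than $4$, because these cycles lie in $F_\mathcal{F}$. Suppose $G'$ contains an induced $C = (u_1,u_2,u_3,u_4,u_1)$. Because $G'$ is point-determining, we can argue as in the proof of \Cref{lem:C4}. We obtain $v$ adjacent to $u_1$ but not to $u_3$, and $w$ adjacent to $u_2$ but not to $u_4$. This yields an induced house, $A$, $B$, $\overline{X_{170}}$ or $\overline{2P_3}$. The house, $\overline{X_{170}}$ and $\overline{2P_3}$ all contain triangles, so they are excluded. The graphs $A$ and $B$ are excluded directly. Hence $G'$ is a point-determining forest, so every support vertex has exactly one leaf and there is at most one isolated vertex.

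The main step is to show that $G'$ contains no member $O$ of $\obs(\mathcal{F})$. Suppose it does. Since $G'$ is a forest, $O$ is a forest. $O$ has at most one isolated vertex by hypothesis, so the only way $O$ fails to be point-determining is that some component $T$ has a support vertex $u$ with leaves $u_1,\dots,u_k$, where $k\ge 2$. In $G'$, the images of $u_1,\dots,u_{k-1}$ cannot be false twins of $u_k$. Since everything is acyclic and $O$ is induced, each $u_i$ must have an additional neighbour $x_i$ in $G'$ outside the copy of $O$, adjacent to no other vertex of that copy.

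I want to choose these $x_i$ so that the copy of $O$ together with them induces $O$ with $T$ replaced by $T^p$, which is a member of $F_\mathcal{F}$. This is the delicate point. The $x_i$ must be distinct, and each must be nonadjacent to the rest of the copy and to the other added vertices. Distinctness holds because a common neighbour $x$ of $u_i$ and $u_j$ would close the cycle $u\,u_i\,x\,u_j$, contradicting that $G'$ is a forest. Nonadjacency of $x_i$ to another vertex $y$ of $O$ follows the same way: $u_i$ and $y$ are connected through $u$ inside $O$, unless $y$ lies in a different component of $O$. In that case adding $x_i$ merges two components. I will have to handle this by a careful choice of $x_i$, or by noting that the resulting graph is still acyclic and contains a proper member of $F_\mathcal{F}$ after re-selecting. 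I expect this to be the main obstacle.

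Once this substitution is carried out for every non-point-determining component, we get an induced member of $F_\mathcal{F}$ in $G'$, a contradiction. Hence $G' \in \mathcal{F}$, which completes the proof.
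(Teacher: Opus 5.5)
Your plan is the paper's: pass to the point-determining reduction $G'$, show $G'$ is a forest, then grow an induced copy of a non-point-determining obstruction into the corresponding member of $F_\mathcal{F}$. The step you single out as the main obstacle is a genuine gap, and it cannot be closed, because the statement is false as written. The paper's proof has the same gap: it asserts without justification that $G'$ contains $F$ with the edges $uu_1,\dots,uu_{k-1}$ subdivided.

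For a counterexample, let $\mathcal{F}$ be the class of $(P_3+K_1)$-free forests. Then $\obs(\mathcal{F})=\{C_3,C_4,C_5,P_3+K_1\}$; each $C_n$ with $n\ge 6$ properly contains $P_5\supseteq P_3+K_1$, so it is not minimal. The hypothesis therefore holds, and every member of $F_\mathcal{F}$ is either $P_4+K_1$ or contains a cycle. The path $P_5=(1,2,3,4,5)$ is acyclic, and its only five-vertex induced subgraph is itself, which has four edges while $P_4+K_1$ has three; so $P_5$ is $F_\mathcal{F}$-free. But $P_5$ is point-determining and $\{1,2,3,5\}$ induces $P_3+K_1$. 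A full homomorphism from a point-determining graph is injective (two vertices with the same image are false twins), hence an induced embedding. So $P_5$ would lie in the hereditary class $\mathcal{F}$, which it does not; thus $P_5$ is not fully $\mathcal{F}$-colourable. This is exactly your scenario: the twin leaf $3$ gets the extra neighbour $4$, which is adjacent to the isolated vertex $5$ of the copy. Re-selecting does not help, because $P_5$ contains no member of $F_\mathcal{F}$ at all. Isolated vertices are not the culprit: $(P_3+K_2)$-free forests and $P_6$ fail in the same way.

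Your argument does go through if every non-point-determining forest in $\obs(\mathcal{F})$ is a tree. Let $O$ be an induced copy of such a tree in $G'$, and $u$ a support vertex with twin leaves $u_1,\dots,u_k$. In the forest $G'$ these leaves have $u$ as their only common neighbour, so two of them are false twins exactly when both have degree $1$. Hence at most one, say $u_k$, has degree $1$. You need this relabelling: ``not a false twin of $u_k$'' does not by itself give each $u_i$ an extra neighbour. For $i<k$ pick a neighbour $x_i\ne u$ of $u_i$; it lies outside $O$. Any coincidence or adjacency among the chosen $x$'s (over all support vertices of $O$), or any edge from $x_i$ to $V_O\setminus\{u_i\}$, closes a cycle through the connected $O$. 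So $V_O$ together with the $x_i$ induces $T^p$. In the disconnected case, $F_\mathcal{F}$ would have to be enlarged by the graphs in which the new pendant vertices attach to other components (e.g.\ $P_5$ for $P_3+K_1$).

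Your forest step is correct, but you need not borrow the case analysis from the lemma on $C_4$, which relies on minimality. If $v$ is adjacent to $u_2$ or $u_4$, or $w$ to $u_1$ or $u_3$, there is a triangle. Otherwise $\{u_1,\dots,u_4,v,w\}$ induces $A$ or $B$.
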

\begin{proof}
    We proceed to prove the missing direction by contrapositive.   Suppose that
    $G$ is not fully $\mathcal{F}$-colourable and let $G'$ be the
    point-determining graph obtained from $G$ by removing all false twins.
    Clearly, $G'$ is not in $\mathcal{F}$, as otherwise $G$ would be fully
    $\mathcal{F}$-colourable. Hence, $G'$ contains a graph $F$ in
    $\obs(\mathcal{F})$ as a minimal obstruction.   If $F$ is point-determining,
    then it belongs to $F_{\mathcal{F}}$.  Since $A, B$ and every cycle of
    length different from $4$ are elements of $F_{\mathcal{F}}$, if $G'$ is not
    a forest, then $G$ is not $F_{\mathcal{F}}$-free.   So, suppose that $F$ is
    not point-determining. By hypothesis, pairs of false twins in $F$ are leaves
    with a common support vertex.  Because $G'$ is a point-determining forest,
    for each set $\{ u_1, \dots, u_k \}$ of pairwise false twins in $F$ with
    common support vertex $u$ it must be the case that, maybe except for one,
    each of them has a neighbour in $G'$ which is not a neighbour of any of the
    rest.  As a result, $G'$ contains an induced copy of $F$ with each edge
    $uu_1, \dots, uu_{k-1}$ subdivided.   In other words, $G'$ contains a graph
    in $F_\mathcal{F}$ as an induced subgraph. Therefore, in all cases $G$
    contains a graph in $F_\mathcal{F}$ as an induced subgraph.
\end{proof}

The ideas behind \Cref{lem:C4} and \Cref{pro:tree-obstructions} are similar: we
find the needed minimal obstructions by extending those graphs in
$\obs(\mathcal{F})$ which are not point-determining to a set of
point-determining supergraphs. Similar ideas lead to Lemma 5.1 in
\cite{bodirskyArXiv}\footnote{Although \cite{bodirskyArXiv} is the arXiv version
of \cite{bodirskyLIPI380}, the relevant result is included in an appendix not
appearing in \cite{bodirskyLIPI380}.}. These developments were independent of
each other; the results in \cite{bodirskyArXiv} are more general both in scope
and in context while our results are more specific in describing the concrete
extensions. In \cite{bodirskyArXiv} the results are stated for general
relational structures; in the context of graphs they amount to bounding the size
of minimal obstructions to full $\mathcal{F}$-colourability as a function of the
maximum size of a graph in $\obs(\mathcal{F})$, for any hereditary family
$\mathcal{F}$. This is an extension of the result of \cite{federDM308} mentioned
in the introduction, and offers another proof of a result in \cite{ballEJC31}.

The strategy described in the previous paragraph may be used to extend the
result of \Cref{pro:tree-obstructions} to the case of an arbitrary hereditary
family of forests $\mathcal{F}$, at the cost of losing knowledge of the graphs
in $F_{\mathcal{F}}$.   We have observed that there is a unique way of extending
a tree $T$ to a point-determining tree $T^p$---the situation in a forest is
quite different.   Let $S$ be the set of isolated vertices in a forest $G$.   If
$S$ has more than one element, then it consists of pairwise false twins. For a
point-determining forest $H$ with independence number $|S|$, let $G^p$ be the
forest obtained from $G$ by replacing each non-point-determining component $T$
of $G$ by $T^p$ and the set $S$ by $H$. As we have discussed, $G^p_H$ is a
point-determining forest that contains $G$ as an induced subgraph.   Hence, we
can extend the construction of $F_{\mathcal{F}}$ with this modification, i.e.,
substituting $S$ by every possible point-determining forest with independence
number $|S|$, to obtain a result analogous to \Cref{pro:tree-obstructions}.
Nonetheless, we no longer obtain a single forest in $F_{\mathcal{F}}$ for each
non-point-determining forest in $\obs(\mathcal{F})$. Notice that knowing the
complete family of point-determining forests with independence number $k$ allows
us to describe this extended version of $F_{\mathcal{F}}$ precisely. Therefore,
we propose the problem of enumerating, for every positive integer $k$, the
family of point-determining forests with independence number exactly $k$.   To
finish this discussion, let us stress that the obstructions to full
$\mathcal{F}$-colouring in $F_{\mathcal{F}}$ are not necessarily minimal.   As
an obvious example, if $\mathcal{F}$ contains a linear forest, then
$F_{\mathcal{F}}$ also contains a linear forest, and hence, it is no longer
necessary for $F_{\mathcal{F}}$ to contain all cycles of length different from
$4$.  

An immediate consequence of \Cref{lem:C4} is that a graph admits a full
homomorphism to a $C_4$-free graph if and only if it is $F_{C_4}$-free.   Below
we present some other interesting direct consequences of \Cref{lem:C4}.

Lekkerkerker and Boland identified the family of minimal forbidden induced
subgraphs for a graph to be an interval graph \cite{lekkerkerkerFM51}, they are
all the cycles of length greater than $3$ together with the two individual
graphs and three infinite families depicted in \Cref{fig:lekkerkerker}; call
$\mathcal{I}$ the set of graphs depicted in \Cref{fig:lekkerkerker}.   A
\textit{trampoline} is a graph obtained from a complete graph $K$ of order at
least $3$ by choosing a hamiltonian cycle $C$ in $K$ and adding a new vertex for
each edge $e$ of $C$ and making it adjacent to both endpoints of $e$; call $T_n$
the trampoline obtained from $K_n$.  In \cite{farberDM43} Farber characterized
strongly chordal graphs as the trampoline-free chordal graphs.  In the theorem
below we also use the notation from \Cref{fig:FC4} for house
$\overline{X_{170}}$ and $\overline{2P_3}$.

\begin{figure}[htb!]
\centering
\begin{tikzpicture}
    \begin{scope}
        \node [vertex] (c) at (0,0){};
        \foreach \i in {0,1,2}{
            \node [vertex] (\i) at ({90+360/3*\i}:0.75){};
            \node [vertex] (\i-1) at ({90+360/3*\i}:1.5){};
        }
        \foreach \i in {0,1,2}{
            \draw [edge] (c) to (\i);
            \draw [edge] (\i) to (\i-1);
        }
    \end{scope}
    \begin{scope}[xshift=2.8cm,yshift=0.45cm]
        \node [vertex] (c) at (0,0){};
        \node [vertex] (u) at (0,1){};
        \node [vertex] (d) at (0,-1){};
        \node [vertex] (l) at (-1,0){};
        \node [vertex] (r) at (1,0){};
        \node [vertex] (ld) at (-1,-1){};
        \node [vertex] (rd) at (1,-1){};
        \foreach \v in {u,l,d,r}
            \draw [edge] (c) to (\v);
        \foreach \v in {ld,l,r,rd}
            \draw [edge] (d) to (\v);
        \foreach \u/\v in {ld/l,rd/r}
            \draw [edge] (\u) to (\v);
    \end{scope}
    \begin{scope}[xshift=5.7cm]
        \node [vertex] (m) at (0,-0.375){};
        \foreach \i in {0,1,2}{
            \node [vertex] (\i) at ({90+360/3*\i}:0.75){};
            \node [vertex] (\i-1) at ({90+360/3*\i}:1.5){};
        }
        \foreach \i in {0,1,2}
            \draw [edge] (\i) to (\i-1);
        \foreach \u/\v in {0/1,2/0,1/m,0/m}
            \draw [edge] (\u) to (\v);
        \draw [edge,dashed] (m) to (2);
    \end{scope}
    \begin{scope}[xshift=9cm]
        \node [vertex] (a) at (240:0.865){};
        \node [vertex] (b) at (300:0.865){};
        \foreach \i in {1,2}
            \node [vertex] (\i) at ({270+360/3*\i}:0.75){};
        \foreach \i in {0,1,2}
            \node [vertex] (\i-1) at ({90+360/3*\i}:1.5){};
        \foreach \u/\v in {0-1/1,0-1/2,1-1/2,2-1/b,2-1/1,1/2}
            \draw [edge] (\u) to (\v);
        \foreach \u in {1,2}
            \foreach \v in {a,b}
                \draw [edge] (\u) to (\v);
        \draw [edge] (1-1) to (a);
        \draw [edge] (2-1) to (b);
        \draw [edge,dashed] (a) to (b);
    \end{scope}
\end{tikzpicture}
\caption{Family $\mathcal{I}$ of Lekkerkerker and Boland's minimal
obstructions for interval graphs (other than cycles). Dashed edges represent
(possibly trivial) paths.}
\label{fig:lekkerkerker}
\end{figure}

\begin{theorem}
\label{thm:C4-consequences}
    For the following families, we exhibit the set $\mathcal{S}$ of minimal
    obstructions to full colouring.
    \begin{enumerate}
        \item For quasi-threshold graphs, $\mathcal{S} = \{ P_4,
        \overline{2P_3}\}$.

        \item For threshold graphs, $\mathcal{S} = \{ 2K_2, P_4,
        \overline{2P_3}\}$.

        \item For split graphs, $\mathcal{S} = \{ 2K_2, C_5, A,
            \textnormal{house}, \overline{X_{170}}, \overline{2P_3} \}$.

        \item For forests, $\mathcal{S} = \{ A, B \} \cup \{ C_n \colon\ n = 3
            \lor n \ge 5 \}$. \label{itm:forest}
            
        \item For chordal graphs, $\mathcal{S} = F_{C_4} \cup \{ C_n \colon\ n
        \ge 5 \}$.
            
        \item For interval graphs, $\mathcal{S} = F_{C_4} \cup \{ C_n \colon\ n
            \ge 5 \} \cup \mathcal{I}$.
            
        \item For strongly chordal graphs, $\mathcal{S} = F_{C_4} \cup \{ C_n
            \colon\ n \ge 5 \} \cup \{ T_n \colon\ n \ge 3 \}$.
    \end{enumerate}
\end{theorem}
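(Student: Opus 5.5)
Each item will be derived from \Cref{lem:C4}, except the forest case, which comes from \Cref{pro:tree-obstructions}. For every family I will recall $\obs(\mathcal{F})$ and check that $C_4$ is its only non-point-determining member. Then $\mathcal{S}$ is $\obs(\mathcal{F})\setminus\{C_4\}$ together with those graphs of $F_{C_4}$ that are $(\obs(\mathcal{F})\setminus\{C_4\})$-free.

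The obstruction sets I will use are standard.
\begin{itemize}
\item Quasi-threshold: $\{P_4, C_4\}$.
\item Threshold: $\{2K_2, P_4, C_4\}$.
\item Split: $\{2K_2, C_4, C_5\}$.
\item Chordal: all cycles $C_n$ with $n\ge 4$.
\item Interval: Lekkerkerker--Boland, i.e.\ cycles of length at least $4$ together with $\mathcal{I}$.
\item Strongly chordal: Farber, i.e.\ holes together with the trampolines.
\end{itemize}
Point-determination of $P_4$, $2K_2$, $C_n$ ($n\ge5$), the trampolines $T_n$ and the graphs in $\mathcal{I}$ is checked directly. In a trampoline, each ear vertex has a distinct pair of neighbours, and clique vertices are adjacent to each other, so no false twins occur. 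In $\mathcal{I}$ (asteroidal-triple type graphs), distinct vertices have distinct closed neighbourhoods, which can be read off from the figure.

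Then comes the filtering step. For each graph in $F_{C_4}=\{A,B,\text{house},\overline{X_{170}},\overline{2P_3}\}$ I check whether it contains a remaining obstruction.
\begin{itemize}
\item $A$ (the domino) contains an induced $P_4$ and an induced $2K_2$ (two opposite side edges).
\item $B$ contains $P_4$ and $2K_2$.
\item The house contains $P_4$ but no $2K_2$ and no $C_5$.
\item $\overline{X_{170}}$ contains $P_4$: the pendant vertex, its neighbour on the $C_4$, then a $C_4$-neighbour of it not adjacent to the centre, then the next $C_4$ vertex. It contains no $2K_2$, no $C_5$, and no hole other than the $C_4$.
\item $\overline{2P_3}$ is the complement of $2P_3$. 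Since $2P_3$ contains no induced $P_4$ or $C_4$, its complement has no induced $P_4$ or $2K_2$. It is not chordal, having an induced $C_4$, but contains no $C_5$.
\end{itemize}
This gives items 1--3. For chordal, interval and strongly chordal graphs, all of $F_{C_4}$ must survive. Every member of $F_{C_4}$ has at most $6$ vertices and its only hole is $C_4$, so it contains no $C_n$ with $n\ge5$. Every graph in $\mathcal{I}$ and every trampoline has at least $6$ vertices and is chordal, hence not equal to any member of $F_{C_4}$, each of which contains a $C_4$. A $6$-vertex member of $F_{C_4}$ could only contain such a graph as itself, so none of them is contained. This gives items 5--7.

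For forests, $\obs = \{C_n : n\ge3\}$. This contains $C_4$, which is not point-determining, and no member has an isolated vertex, so \Cref{pro:tree-obstructions} applies. Its family $F_{\mathcal{F}}$ consists of $A$, $B$ and the cycles of length different from $4$, since there are no tree components to replace. Minimality holds because each listed graph is point-determining and all of its proper induced subgraphs are forests or lie in an acyclic/domino-free situation. Concretely, proper induced subgraphs of $A$ and $B$ are forests or $C_4$ with pendants whose twins collapse, so they are fully forest-colourable. The step I expect to be most delicate is this verification of minimality for $A$ and $B$, together with the containment checks for the six-vertex graphs. These are finite case checks.
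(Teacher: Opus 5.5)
\textbf{Gap in item 3.} Your strategy is the paper's: apply \Cref{lem:C4}, then filter $F_{C_4}$ against $\obs(\mathcal{F})\setminus\{C_4\}$. The filtering for item 3, however, rests on a false containment claim.

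You call $A$ the domino and say it contains an induced $2K_2$. In the paper the domino is $B$. The graph $A$ is the $4$-cycle $(u_1,u_2,u_3,u_4,u_1)$ with a pendant $v$ at $u_1$ and a pendant $w$ at $u_2$, where $vw\notin E$. This is the case $N_C(v)=\{u_1\}$, $N_C(w)=\{u_2\}$ in the proof of \Cref{lem:C4}.

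This graph has no induced $2K_2$. Every edge other than $u_3u_4$ contains $u_1$ or $u_2$, and $u_1u_2$, $u_1u_4$, $u_2u_3$ are all edges. Hence any two disjoint edges of $A$ are joined by an edge. Since $A$ is also $C_5$-free, it survives the split filter, which is exactly why it appears in item 3.

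If you carry out your check as written, $A$ is discarded and you obtain $\{2K_2, C_5, \textnormal{house}, \overline{X_{170}}, \overline{2P_3}\}$. That contradicts the statement, so ``this gives items 1--3'' does not follow. The observation you need is the paper's: within $F_{C_4}$, only $B$ contains $2K_2$. Items 1 and 2 are unaffected, because $(v,u_1,u_2,w)$ is an induced $P_4$ in $A$.

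\textbf{Smaller remarks.} For forests you invoke \Cref{pro:tree-obstructions} and then verify minimality by hand. That works, but the paper simply applies \Cref{lem:C4} again. The obstructions for forests are the cycles $C_n$ with $n\ge 3$, and $C_4$ is the only one that is not point-determining. Since $A$ and $B$ are the only triangle-free members of $F_{C_4}$, the lemma gives the set and its minimality at once.

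Also, checking that graphs in $\mathcal{I}$ have ``distinct closed neighbourhoods'' is not the right test for point-determination. Non-adjacent vertices never share a closed neighbourhood, so that check says nothing about false twins. What you need is that non-adjacent vertices have distinct open neighbourhoods; this does hold for the graphs in $\mathcal{I}$.
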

\begin{proof}
    In all cases, the only graph in $\obs(\mathcal{F})$ which is not
    point-determining is $C_4$, so in order to obtain the desired set of minimal
    obstructions by applying \Cref{lem:C4}, it suffices to verify which graphs
    in $F_{C_4}$ contain some graph in $\obs(\mathcal{F})$ as an induced
    subgraph.   To this end, notice that no graph in $F_{C_4}$ contain a cycle
    of length greater than $4$ as an induced subgraph.

    For the first two items notice that $P_4$ is an induced subgraph of the
    graphs $\textnormal{house}, A, B$ and $\overline{X_{170}}$.   Therefore, in
    both cases $\overline{2P_3}$ is the only graph in $F_{C_4}$ which ends up in
    the set of minimal obstructions to full $\mathcal{F}$-colouring.

    For the third item, notice that from within $F_{C_4}$, only $B$ contains
    $2K_2$.

    For the case of forests, observe that $A$ and $B$ are the only graphs in
    $F_{C_4}$ which do not contain a triangle.

    For the last three items, it is clear that the only graph in
    $\obs(\mathcal{F})$ contained in a graph from $F_{C_4}$ is $C_4$.
\end{proof}

Recall that a graph is distance-hereditary if and only if it is hole-free and
$\{B, \textnormal{gem}, \textnormal{house} \}$-free. \Cref{itm:forest} of
\Cref{thm:C4-consequences} characterizes fully forest-colourable graphs in terms
of minimal obstructions.  From this characterization it is trivial to verify
that the class of fully forest-colourable graphs is contained in the class of
distance-hereditary graphs.  As our following result shows, we can also describe
the family of fully forest-colourable graphs in terms of a restricted version of
the recursive construction that characterizes distance-hereditary graphs.   We
deal with the connected case first.

\begin{corollary}
\label{cor:tree-blowup}
    Let $G$ be a connected chordal bipartite graph.   The following statements
    are equivalent.
    \begin{enumerate}
        \item $G$ is $\{A,B\}$-free.
        
        \item There exists a tree $T$ such that $G$ is fully $T$-colourable.
        
        \item $G$ can be constructed from $K_1$ by adding pendant vertices and
            then creating false twins. (All additions of pendant vertices happen
            before all the creations of false twins.)
    \end{enumerate} 
\end{corollary}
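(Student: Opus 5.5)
We prove the cycle $(1)\Rightarrow(2)\Rightarrow(3)\Rightarrow(1)$.

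For $(1)\Rightarrow(2)$ we use \Cref{itm:forest} of \Cref{thm:C4-consequences}. Since $G$ is chordal bipartite, it contains no $C_3$ and no induced cycle of length at least $5$: odd cycles are excluded by bipartiteness, and even holes of length at least $6$ are excluded by chordality. Together with $\{A,B\}$-freeness, this means $G$ avoids every graph in $\mathcal{S}$ for forests, so $G$ admits a full homomorphism $\varphi$ to some forest $F$. We may replace $F$ by the subgraph induced by $\varphi(V_G)$. Because a full homomorphism maps edges onto edges and $G$ is connected, this image is connected. If $G=K_1$ the image is $K_1$, which is a tree. Otherwise every vertex lies on an edge, and every edge of the image is the image of an edge of $G$. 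Hence the image is a connected induced subforest, that is, a tree $T$, and $G$ is fully $T$-colourable.

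For $(2)\Rightarrow(3)$, let $\varphi\colon G\to T$ be a full homomorphism, where we may assume $\varphi$ is surjective onto a tree $T$, as above. The colour classes $\varphi^{-1}(x)$ are independent sets of pairwise false twins. Thus $G$ is obtained from $T$ by replacing each vertex $x$ with $|\varphi^{-1}(x)|$ false twins, which amounts to repeatedly creating false twins of vertices. Every tree is built from $K_1$ by successively adding pendant vertices, for instance by reversing a leaf-deletion order. So we first build $T$ from $K_1$ using pendant additions only, and then create the required false twins. This respects the order demanded in (3).

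For $(3)\Rightarrow(1)$, suppose $G$ is built as in (3). After the pendant-vertex phase we have a tree $T$. Creating a false twin $v'$ of $v$ preserves the existence of a full homomorphism to $T$: we extend the current map by sending $v'$ to the image of $v$. By induction, $G$ is fully $T$-colourable, hence fully forest-colourable. By \Cref{itm:forest} of \Cref{thm:C4-consequences}, $G$ is then $\{A,B\}$-free, since $A$ and $B$ are obstructions to full forest-colouring.

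The only delicate step is the connectivity argument showing that the image of $\varphi$ is a tree rather than a forest. Its key point is that a full homomorphism sends adjacent vertices to adjacent vertices, so a walk in $G$ maps to a walk in the image and connectivity is preserved. We also note that the hypothesis "chordal bipartite" is used only to discard the cycle obstructions in $(1)\Rightarrow(2)$.
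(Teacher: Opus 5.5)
Your proof is correct, but it is organized differently from the paper's. The paper takes (1)$\Leftrightarrow$(2) from \Cref{thm:C4-consequences}. It proves (1)$\Rightarrow$(3) by a direct combinatorial argument, and closes with (3)$\Rightarrow$(2) by sending each created twin to its original, which is the same extension step you use.

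For (1)$\Rightarrow$(3), the paper takes a maximal point-determining induced subgraph $H$ of $G$ and shows it is acyclic. Any cycle in $H$ would yield an induced $C_4$ $(a,b,c,d,a)$. Because $H$ is point-determining, there are witnesses $x \in N(a)\setminus N(c)$ and $y \in N(b)\setminus N(d)$, and by bipartiteness $\{a,b,c,d,x,y\}$ then induces $A$ or $B$. So $H$ is a tree, and every other vertex of $G$ is a false twin of a vertex of $H$.

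You instead reach (3) from (2) by observing that the fibres of a surjective full homomorphism are independent sets of pairwise false twins, so $G$ is a false-twin blow-up of $T$. Your route has three advantages:
\begin{itemize}
\item It confines both the use of the obstruction characterization and the chordal-bipartite hypothesis to the single step (1)$\Rightarrow$(2).
\item It makes (2)$\Rightarrow$(3) valid for arbitrary graphs.
\item It handles the passage from a forest to a tree explicitly, via connectivity of the image; the paper leaves this implicit when citing the theorem.
\end{itemize}
The paper's route is more elementary in one respect. Its chain (1)$\Rightarrow$(3)$\Rightarrow$(2) does not depend on the completeness of the obstruction list in \Cref{thm:C4-consequences}, so only the easy fact that $A$ and $B$ are not fully forest-colourable is needed for the converse. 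It also identifies the tree concretely as an induced subgraph of $G$.
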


\begin{proof}
    The equivalence of the first two items comes from
    \Cref{thm:C4-consequences}.   We now prove that the first item implies the
    third one.

    Suppose first that $G$ is $\{A,B\}$-free.   Let $H$ be a maximal point
    determining induced sugbraph of $G$.   We affirm that $H$ is a tree.   Since
    $G$ is chordal bipartite, any cycle in $G$ is a $4$-cycle.   Aiming for a
    contradiction, suppose that $(a,b,c,d,a)$ is a $4$-cycle in $H$.   Since $H$
    is point-determining, we may assume without loss of generality that there
    are vertices $x$ and $y$ such that $x$ is a neighbour of $a$ and but not of
    $c$, and $y$ is a neighbour of $b$ but not of $d$.   But now, $\{a, b, c, d,
    x, y\}$ induces an $A$ graph if $x$ is not adjacent to $y$, and a $B$ graph
    otherwise.   The contradiction arises from assuming the existence of a cycle
    in $H$, so it is acyclic.   The fact that $H$ is connected comes from its
    maximality and the connectedness of $G$.   Notice that the maximality of $H$
    implies that every vertex in $G$ is either in $H$ or is a false twin of a
    vertex in $H$.    From here, it is clear that $G$ can be obtained from $T$
    by creating false twins.   Therefore, $G$ can be obtained from $K_1$ by
    adding pendant vertices until $T$ is constructed, and then creating false
    twins.

    We finish the proof by verifying that the third item implies the second one.
    Let $T$ be the tree obtained after all the pendant vertex additions have
    been applied. Since $G$ is created by adding false twins to vertices in $T$,
    an obvious full $T$-colouring of $G$ is obtained by mapping each new vertex
    to its false twin originally in $T$.
\end{proof}

If $F$ is a forest and $T$ is any tree obtained from $F$ by adding a new vertex
$v$ and making it adjacent to each component of $F$ with a single edge, then it
is clear that any graph $G$ admitting a full $F$-colouring also admits a full
$T$-colouring.   Thus, any fully forest-colourable graph is also a fully
tree-colourable graph.   Trivially, any fully tree-colourable graph is also a
fully forest-colourable graph, so when $G$ is not necessarily connected, the
first two conditions in \cref{cor:tree-blowup} are also equivalent to fully
forest-colourability.   The recursive construction in the third item of
\Cref{cor:tree-blowup} always creates connected graphs.  Nonetheless, a simple
modification in the construction allows us to construct any graph admiting a
full forest-colouring.   It suffices to add a first step in the recursive
construction where we create false twins starting from $K_1$; after this step,
we end up with an independent set, and each of its vertices is then expanded
into a fully tree-colourable graph.

As we have mentioned, Guzm\'an-Pro proved in \cite{guzmanDM347} that the family
obtained by forbidding all cycles of length different from $4$, together with
graphs $A, B$ and $E$, is precisely the family of fully linear-forest-colourable
graphs. Recalling that claw-free forests are precisely linear forests, it is
natural to ask what happens when we remove the graph $E$ from the set of
forbidden induced subgraphs that characterize fully linear-forest-colourable
graphs.  \Cref{itm:forest} in \Cref{thm:C4-consequences} provides a nice answer
to this question, stating that such a set of forbidden induced subgraphs
characterizes the class of fully forest-colourable graphs.
\cref{cor:tree-blowup} provides an analogous answer for connected fully
tree-colourable graphs.

We finish this section by exploring the case when we forbid the graphs $A, B$,
and $E$, but unlike \cite{guzmanDM347}, allow cycles of any length.   We need to
forbid some additional graphs to obtain an interesting class.   In the proof of
our next result we use the following classic tool.   Consider a graph $G$ and a
subset $X$ of $V$. For a subset $S$ of $X$ the \textit{exact neighbourhood of
$S$ with respect to $X$} is the subset $N_{S,X}$ consisting of vertices $v$ in
$V \setminus X$ such that $N(v) \cap X = S$. When $X$ is clear from the context
we use $N_S$ instead of $N_{S,X}$. The \textit{complete partition of $V$ induced
by $X$}, denoted $\mathcal{P}_X$, is defined to be the set
\[
	\mathcal{P}_X = X \cup \{ N_S \colon\ S \subseteq X \},
\]
which is clearly a partition of $V$ (some of the parts of $\mathcal{P}_X$ may be
empty).   If $S$ is a small set, e.g., $S = \{ a, b \}$, we use $N_{ab}$ instead
of $N_{\{a,b\}}$, or if $S = \{ v_i, v_j \}$ we use $N_{ij}$.

\begin{theorem}
\label{thm:deg2-blowup}
    A graph $G$ admits a full homomorphism to a graph of maximum degree at most
    $2$ if and only if it is $\{ \textnormal{paw}, K_4, 5\textnormal{-pan}, A,
    B, E \}$-free.
\end{theorem}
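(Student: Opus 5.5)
The class $\mathcal{D}$ of graphs of maximum degree at most $2$ (disjoint unions of paths and cycles) is hereditary. By the reduction noted at the beginning of \Cref{sec:main}, $G$ is fully $\mathcal{D}$-colourable if and only if the graph $G'$ obtained by deleting false twins lies in $\mathcal{D}$. Equivalently, every point-determining induced subgraph of $G$ must have maximum degree at most $2$. The plan is to prove both directions via this reduction, in the spirit of \Cref{pro:tree-obstructions}.

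\emph{Necessity.} I first check that each of the paw, $K_4$, the $5$-pan, $A$, $B$ and $E$ is point-determining and has a vertex of degree at least $3$. By \Cref{pro:fullFObs} (third item), none of them is an induced subgraph of a fully $\mathcal{D}$-colourable graph. For $A$ and $B$ the degree-$3$ vertices are the middle ones. The paw, $K_4$, the $5$-pan and $E$ each visibly have a degree-$3$ vertex. Point-determination is a quick check in each case; for example, in $E$ the subdivided leaf ensures that no two leaves share a neighbour.

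\emph{Sufficiency.} Let $G$ be $\{\textnormal{paw},K_4,5\textnormal{-pan},A,B,E\}$-free and let $H$ be a maximal point-determining induced subgraph, obtained by keeping one vertex from each false-twin class. Then $G$ is fully $H$-colourable, so it suffices to show $\Delta(H)\le 2$. Suppose instead that $v\in V_H$ has three neighbours $a,b,c$. I split by $H[\{a,b,c\}]$.
\begin{enumerate}
\item If it contains an edge but is not a triangle, then $\{v,a,b,c\}$ induces a paw, unless it has exactly two edges. In that case it is $K_4$ minus an edge, say with $a,c$ nonadjacent. Since $H$ is point-determining and $a,c$ are false twins in this $4$-vertex subgraph, some vertex $x$ distinguishes $a$ from $c$. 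Adding $x$ should then produce a paw, a $K_4$, or one of the forbidden graphs.
\item If it is a triangle, we get $K_4$ directly.
\item If $\{a,b,c\}$ is independent, so that we have a claw, I use point-determination. Any two of $a,b,c$ are false twins inside the claw, so for each pair some vertex of $H$ distinguishes them. I choose private neighbours $a'$ of $a$ and $b'$ of $b$, not adjacent to $c$. The goal is to derive $E$ when $a'=b'$ is impossible or the private neighbours avoid $v$ and each other. Otherwise, I look for a paw (when a private neighbour is adjacent to $v$), $A$ or $B$ (for a $4$-cycle $v,a,x,b$ with extra neighbours), or a $5$-pan (for a longer cycle through $a$ and $b$).
\end{enumerate}
A useful preliminary step is to note that paw- and $K_4$-freeness force each component of $H$ to be triangle-free or a complete multipartite graph. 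Since $H$ is point-determining, this reduces to the case where $H$ is triangle-free, apart from trivial exceptions such as a triangle component, which has maximum degree $2$ anyway.

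The main obstacle is the claw case in the triangle-free setting. Here the distinguishing vertices may form short cycles with $v$, for instance a $4$-cycle $(v,a,x,b)$ or a $5$-cycle, rather than giving a clean induced $E$. I would handle this by taking a claw centred at a vertex of degree at least $3$ with private neighbours chosen to minimize distance. I would then do a case analysis using the complete partition $\mathcal{P}_X$ with $X=\{v,a,b,c\}$, applied to the exact neighbourhoods $N_S$ that contain the distinguishing vertices. Triangle-freeness restricts the allowed sets $S$ to subsets of $\{a,b,c\}$ or $\{v\}$. Each allowed configuration should yield $E$, $A$, $B$ or a $5$-pan: a common neighbour of two leaves gives a $C_4$, which leads to $A$ or $B$ via further distinguishing vertices, exactly as in the proof of \Cref{lem:C4}.
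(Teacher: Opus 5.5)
Your proof is correct, but it takes a different route from the paper's. The paper argues globally on a connected $G$. If every induced cycle is a $C_4$, it appeals to Guzm\'an-Pro's characterization of fully linear-forest-colourable graphs \cite{guzmanDM347}. If there is a triangle, it uses the complete partition induced by the triangle to show that $G$ is complete $3$-partite. Otherwise it takes a shortest cycle $C_k$ with $k\ge 5$ and uses the complete partition induced by $V(C_k)$ to exhibit $G$ as a blow-up of $C_k$. You instead pass to the point-determining core $H$, as the paper does in the proof of \Cref{cor:tree-blowup}, and show locally that $\Delta(H)\le 2$.

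Your version is self-contained, since it needs no appeal to \cite{guzmanDM347}. It is also uniform. The single fact that an induced $C_4$ in a triangle-free point-determining graph yields $A$ or $B$ (the argument of \Cref{lem:C4}) disposes of every configuration in which extra vertices create only $4$-cycles. An example is a vertex adjacent to $v_0,v_2,v_4$ of a $6$-cycle. The paper's ``shorter cycle different from $4$'' justification for $N_S=\varnothing$ does not literally cover that case, and among the forbidden graphs only $B$ excludes it. The paper's version, in exchange, names the target explicitly (a path, $K_3$ or $C_k$) and writes down the full homomorphism.

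Your claw case is also easier than your last paragraph suggests; no distance minimization or complete partition over $\{v,a,b,c\}$ is needed.
\begin{itemize}
\item The component of $H$ containing $v$ is triangle-free, so the $C_4$ fact gives it girth at least $5$.
\item Point-determination forces two of $a,b,c$, say $a$ and $b$, to have further neighbours $a'$ and $b'$.
\item Girth at least $5$ makes $a'\ne b'$. It also makes each of $a',b'$ nonadjacent to $v$, to $c$ and to the other of $a,b$.
\item Hence $\{v,a,b,c,a',b'\}$ induces a $5$-pan if $a'b'\in E_H$, and the graph $E$ otherwise.
\end{itemize}
This also removes the need to justify choosing private neighbours ``not adjacent to $c$''. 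In general that choice requires taking $c$ with inclusion-minimal neighbourhood, which you did not state.
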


\begin{proof}
    Suppose without loss of generality that $G$ is connected.   If every induced
    cycle of $G$ has length $4$, then $G$ is fully path-colourable, so the
    result holds.    Suppose that $G$ contains a triangle with vertex set $X =
    \{ v_0, v_1, v_2 \}$.   Consider the complete partition induced by $X$, and
    notice that, since $G$ is paw-free, we have $N_i = \varnothing$ for each $i
    \in \{ 0, 1, 2 \}$.  Clearly, not having $K_4$ as a subgraph of $G$ implies
    that $N_{012} = \varnothing$.   Recall that we assumed $G$ connected, and
    notice that $X$ must dominate every vertex in $V \setminus X$, as otherwise,
    by considering a shortest path from a vertex not dominated by $X$ to $X$, we
    would find a paw as an induced subgraph of $G$.   Thus, $N_\varnothing =
    \varnothing$.   Finally, recalling that $G$ is paw-free, it is easy to
    obtain that $N_{ij}$ is completely adjacent to $N_{jk}$ for $\{ i, j, k \} =
    \{ 0, 1, 2 \}$.   Therefore, $(\{ v_0 \} \cup N_{12}, \{ v_1 \} \cup N_{02},
    \{ v_2 \} \cup N_{01})$ is a complete $3$-partition of $G$.

    Now, suppose that, except for the existence of $4$-cycles, the shortest
    cycle in $G$ has length $k$, with $k \ge 5$.   Again, assuming that the
    vertex set of such a cycle is $X = \{ v_0, \dots, v_{k-1} \}$, we consider
    the complete partition $\mathcal{F}_X$ induced by $X$.   The absence of the
    $5$-pan and the $E$ graph as induced subgraphs forces $N_i$ to be empty for
    every $i \in \{ 0, \dots, k-1 \}$.   For any $S \subset X$, if $|S| \ge 3$,
    then $N_S$ must be empty, as otherwise $G$ would contain a cycle of length
    strictly less than $k$ and different from $4$.   For the same reason, when
    $|S| = 2$, we have that $N_S \ne \varnothing$ if and only if $S = \{ v_i,
    v_{i+2} \}$ for some $i \in \{ 0, \dots, k-1 \}$, with subscripts taken
    modulo $k$.   But now, as $G$ is connected and $A$-free, also
    $N_\varnothing$ is empty, because the only possibility for a vertex in
    $N_\varnothing$ to reach $C_k$ would be through a vertex in $N_{i(i+2)}$,
    which would create an induced $A$.   Finally, since $G$ is $B$-free, $N_{i
    (i+2)}$ is completely adjacent to $N_{(i+1)(i+3)}$ for each $i \in \{ 0,
    \dots, k-1 \}$ with subscripts taken modulo $k$.   Therefore, $(\{ v_0 \}
    \cup N_{(k-1)2}, \{ v_1 \} \cup N_{02}, \dots, \{ v_{k-1} \} \cup
    N_{(k-2)0})$ is a full $C_k$-homomorphism.
\end{proof}

\section{Recognition of full threshold-colourability}
\label{sec:algo}

\Cref{thm:C4-consequences} suggests the possible existence of efficient
certifying recognition algorithms for the considered graph classes. We focus on
the simplest case of threshold graphs. Our algorithm is the obvious one that
comes to mind, i.e., it removes all false twins of an input graph, and test the
resulting graph for membership in $\mathcal{F}$.   We focus on an efficient
implementation of this strategy which takes advantage of the structure of
threshold graphs and consequently is more efficient than standard algorithms for
removing twins, cf. \Cref{sec:conclusions}.

Recall that for any linear ordering $\prec$ of the vertex set of a graph $G$, we
can order the adjacency lists of all the vertices in $G$ with respect to $\prec$
in time $O(|V|+|E|)$.   Also, by using counting sort we can order the vertices
of a graph non-increasingly with respect to their degrees in time $O(|V|)$. So,
it takes time $O(|V|+|E|)$ to obtain the vertices of an input graph $G$ ordered
non-increasingly with respect to their degrees and having each of their
adjacency lists ordered accordingly.

Proceed by greedily colouring the vertices of $G$ using the aforementioned
ordering to obtain a colouring $c$.   For each degree $d_i$ in the degree
sequence of $G$, identify all the vertices of $G$ with degree $d_i$ and coloured
with the same colour under $c$; this is easily done by exploring the vertices of
degree $d_i$, and keeping a doubly linked list for each of the colours found so
far holding the vertices coloured with that colour. For each of these linked
lists, compare the neighbourhoods of all the vertices in the list: verify that
the first element of each list is the same; then verify that the second element
of each list is the same, and so on.   This can be easily achieved by using a
different pointer to traverse each list.   Since the explored lists have the
same length, if at some point two lists differ, then by exploring the rest of
these two lists it is possible to find a vertex which is in one of the lists and
not the other, and vice versa.   If the involved lists are $N(u)$ and $N(v)$,
and the procedure found that $x \in N(u) \setminus N(v)$ and $y \in N(v)
\setminus N(u)$, then $\{ u, v, x, y \}$ induces either a copy of $2K_2$, if $xy
\notin E(G)$, or a copy of $P_4$, if not.   In such a case, $\{ u, v, x, y \}$
is returned as a no-certificate.   In the case that all these tests pass, then
all the vertices with the same degree and coloured with the same colour are
false twins.

If $u$ and $v$ are twins in $G$, then they have the same degree, and they cannot
receive different colours because in a greedy colouring, the first colour
available for the first of them that was processed is also available for the
second.   Therefore, two vertices in $G$ are twins if and only if they have the
same degree and the same colour.   Therefore, by taking one vertex in each
colour class we obtain a point-determining graph $G'$.   It is now possible to
run any certifying recognition algorithm for threshold graphs running in time
$O(|V|+|E|)$ (e.g., the one presented in \cite{heggernesNJC14}); if such an
algorithm returns an induced copy of $2K_2$ or $P_4$, our algorithm also returns
this copy as a no-certificate.   If an induced copy of $C_4$ is found on $G'$,
say $(a, b, c, d, a)$, then the algorithm compares $N(a)$ to $N(c)$ and $N(b)$
to $N(d)$ to find, without loss of generality, a vertex $x \in N(a) \setminus
N(c)$ and a vertex $y \in N(b) \setminus N(d)$.   If $x$ is not adjacent to $b$
and $d$ in $G'$, then $(x,a,b,c)$ or $(x,a,d,c)$ is an induced $P_4$ in $G'$,
respectively. Analogously, if $y$ is not adjacent to $a$ and $c$, an induced
copy of $P_4$ is found in $G'$.   Finally, if $x$ is adjacent to $y$, then $\{
a, b, c, d, x, y \}$ induces a copy of $\overline{2P_3}$ in $G'$, and if not,
then $(x,d,c,y)$ is an induced $P_4$ in $G'$.   In any case, it is possible to
return a minimal obstruction as a no-certificate.   When the algorithm
successfully shows that $G'$ is a threshold graph, it then returns a split
partition together with the nested neighbourhood ordering as certificate of
membership; our algorithm returns these together with the colouring generated in
the first step as a yes-certificate.

The discussion above proves the following result.

\begin{theorem}
    There is a certifying algorithm that runs in time $O(|V|+|E|)$ to recognize
    fully threshold-colourable graphs.
    \hfill $\square$
\end{theorem}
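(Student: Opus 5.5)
The proof consists of two parts: correctness of the procedure described above, and the running time bound.

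For correctness, I will first argue that every pair of twins gets identified. The greedy colouring is computed along the non-increasing degree order. If $u$ and $v$ are false twins, they have equal degree and identical neighbourhoods. Hence when the later of the two is processed, the colours on its already-coloured neighbours are exactly those that blocked the earlier one. There is one subtle point: the earlier twin itself is not a neighbour, so it does not block its own colour. Therefore both receive the same colour.

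Conversely, the neighbourhood comparison within each (degree, colour) class either certifies that all members are false twins or returns two vertices $u,v$ with $x\in N(u)\setminus N(v)$ and $y\in N(v)\setminus N(u)$. Members of a colour class are nonadjacent, so $\{u,v,x,y\}$ induces $2K_2$ or $P_4$. Both are obstructions to full threshold-colouring by \Cref{thm:C4-consequences}. Thus either a certificate is returned, or collapsing each class to one vertex yields the point-determining graph $G'$ with a full homomorphism from $G$.

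Next I will invoke the fact that $G$ is fully threshold-colourable if and only if $G'$ is threshold. I then analyse the outputs of the certifying threshold recognition algorithm run on $G'$. A $2K_2$ or $P_4$ found there is an induced subgraph of $G$ as well. A $C_4=(a,b,c,d,a)$ must be extended using point-determination of $G'$. The vertices $x\in N(a)\setminus N(c)$ and $y\in N(b)\setminus N(d)$ exist, and the case analysis in the text produces either an induced $P_4$ or $\overline{2P_3}$. I would double-check the adjacencies of $x$ to $b,d$ and of $y$ to $a,c$ in each case, since this is where an error could hide. A yes-answer yields the split partition and nested neighbourhood ordering of $G'$, which, together with the colouring, certifies a full homomorphism to a threshold graph.

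For the running time, the degree sort, the adjacency-list ordering and the greedy colouring each take $O(|V|+|E|)$. The list comparisons touch each adjacency list a constant number of times, and finding $x,y$ after a mismatch is a single further scan. Extracting $G'$ is linear, and the algorithm of \cite{heggernesNJC14} is linear. Extending a $C_4$ costs $O(\deg a+\deg b+\deg c+\deg d)$.

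The main obstacle I anticipate is the comparison step, in two respects. First, the sorted adjacency lists must be compared in lockstep, so a mismatch certifies an element in each symmetric difference; this works because the lists have equal length. Second, the greedy colouring must be computable in linear time, using a per-vertex array of forbidden colours that is reset after each vertex.
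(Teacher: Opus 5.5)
Your proposal is correct and follows the paper's own argument step for step. The steps are: greedy colouring along the degree order; lockstep comparison of sorted adjacency lists inside each (degree, colour) class, which yields an induced $2K_2$ or $P_4$ on a mismatch; reduction to the point-determining graph $G'$; and the certifying threshold recognition on $G'$, with an induced $C_4$ extended to a $P_4$ or a $\overline{2P_3}$.

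One justification needs repair. When the later twin $v$ is coloured, its already-coloured neighbours need not block exactly the colours that blocked $u$, because common neighbours can be coloured between $u$ and $v$. For example, take $C_4$ with parts $\{u,v\}$, $\{w,z\}$ and order $u,w,v,z$. Nothing blocks $u$, so $u$ gets colour $1$ and $w$ gets colour $2$; then colour $2$ blocks $v$. The correct reason, which the paper's remark that ``the first colour available for the first of them is also available for the second'' encodes, has two parts:
\begin{itemize}
\item Every colour below $c(u)$ is still blocked for $v$, by the neighbours that precede $u$.
\item The colour $c(u)$ stays free for $v$, because every neighbour of $v$ is a neighbour of $u$ and the greedy colouring is proper.
\end{itemize}
This works for any vertex order; the degree order plays no role here.

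You should also say briefly why $G'$ is point-determining once all classes pass the test. Each class consists of pairwise false twins, so every $N_G(w)$ is a union of classes. Hence equal neighbourhoods in $G'$ lift to equal neighbourhoods in $G$, which would force the two vertices into the same class.

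The case analysis you deferred does go through:
\begin{itemize}
\item If $x\not\sim b$ (resp.\ $x\not\sim d$), then $(x,a,b,c)$ (resp.\ $(x,a,d,c)$) is an induced $P_4$, and symmetrically for $y$.
\item Otherwise, the only non-edges among $\{a,b,c,d,x,y\}$, apart from possibly $xy$, are $ac$, $bd$, $xc$ and $yd$.
\item This gives $\overline{2P_3}$ if $x\sim y$, and the induced $P_4$ $(x,d,c,y)$ if not.
\end{itemize}
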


\section{Centipedes}
\label{sec:centipedes}

In this section we focus on full $H$-colourability for a single graph $H$.   In
\cite{guzmanDM347} the author obtained an elegant answer when $H$ is a path.
When addressing this question for other trees $H$, we found the nicest answer in
the case of centipedes.   We start by recalling the solution of
\cite{guzmanDM347} for paths.   The graph $E$ is obtained from the claw by
subdividing two of its edges; it can also be obtained from the leftmost graph in
\Cref{fig:lekkerkerker} by deleting one leaf.   For a positive integer $n$, the
sets $C(n), O(n)$ and $LF(n)$ are defined as follows:
\[
    C(n) = \{ C_m \colon\ m = 3 \lor 5 \le m \le n+1 \},
\]
\[
    O(n) =
    \left\{ \begin{array}{cl}
        \varnothing   & \textnormal{if } n \le 4 \\
        \{ B \}       & \textnormal{if } n = 5   \\
        \{ A, B \}    & \textnormal{if } n = 6   \\
        \{ A, B, E \} & \textnormal{if } n \ge 7,
    \end{array} \right.
\]
and $LF(n)$ is the union $LF_1(n) \cup LF_2(n) \cup LF_3(n)$ where
\begin{align*}
    LF_1(n) &= \{ m_2P_2 \colon\ 3m_2 = n+2 \}, \\
    LF_2(n) &= \{ P_1 + m_2P_2 + m_4P_4 \colon\ 3m_2 + 5m_4 = n+1 \}, \\
    LF_3(n) &= \{ P_1 + m_2P_2 + m_4P_4 + m_6P_6 \colon\ 3m_2 + 5m_4 + 7m_6= n
    \}.
\end{align*}

\begin{theorem}\cite{guzmanDM347}
\label{thm:path-obs}
    Let $n$ be any positive integer. The set of minimal $P_n$-obstructions is
    $C(n) \cup LF(n) \cup O(n)$.
\end{theorem}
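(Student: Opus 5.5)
The plan is to reduce everything to point-determining graphs, as in \Cref{sec:main}. Let $G'$ be obtained from $G$ by deleting false twins. Then $G$ is fully $P_n$-colourable if and only if $G'$ is. Since $G'$ is point-determining, any full homomorphism of $G'$ is injective. Hence $G$ is fully $P_n$-colourable if and only if $G'$ is an induced subgraph of $P_n$.

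The first step is to describe these induced subgraphs. They are exactly the linear forests $P_{a_1}+\dots+P_{a_k}$ with $\sum_i (a_i+1)-1\le n$. The point-determining ones among them have at most one $P_1$ component and no $P_3$ component, since the two ends of a $P_3$ are false twins. For a point-determining linear forest $L$, define its \emph{cost} as $c(L)=\sum_i(a_i+1)-1$. The criterion then reads: $G$ is fully $P_n$-colourable if and only if $G'$ is a linear forest with $c(G')\le n$.

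Every minimal obstruction $F$ is point-determining, because deleting a false twin preserves colourability. So there are two cases. If $F$ is not a linear forest, \Cref{thm:C4-consequences} together with the fact that claw-freeness characterises linear forests among forests shows that $F$ contains a cycle other than $C_4$, or $A$, $B$, or $E$. Here $E$ arises because the claw is not point-determining, and $E$ is its point-determining extension, as in \Cref{pro:tree-obstructions}. Minimality then forces $F$ to be one of these graphs, and it remains to decide which ones are genuine obstructions for the given $n$.

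\begin{itemize}
\item For $C_m$: deleting any vertex leaves $P_{m-1}$, of cost $m-1$. So $C_m$ is a minimal obstruction exactly when $m-1\le n$, which gives $C(n)$.
\item For $A$, $B$ and $E$: compute the largest cost of the point-determining reduction of a vertex-deleted subgraph. For example, $B$ minus a vertex reduces to $P_4$ or $P_5$, and $A$ minus a vertex contains $P_5$ or $P_1+P_4$. These computations produce the thresholds $5$, $6$ and $7$ in $O(n)$. For smaller $n$, some proper subgraph, such as a long path, is already an obstruction, so the graph is not minimal.
\end{itemize}

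If $F$ is a linear forest, then $c(F)>n$, while every proper induced subgraph has a reduction of cost at most $n$. The key observations are the following.
\begin{itemize}
\item A component $P_m$ with $m\ge 7$, or $m=5$, can be replaced by $P_{m-2}+P_1$ or by similar splittings of equal cost. Hence, except for the allowed single $P_1$, only components $P_2$, $P_4$ and $P_6$ can occur in a minimal obstruction. This has to be checked carefully: deleting the second vertex of $P_m$ yields $P_1+P_{m-2}$, of the same cost, and this is a proper induced subgraph.
\item Deleting a vertex should lower the cost by exactly one in the best case. This pins down the exact cost: $c(F)=n+1$, with the bookkeeping distinguishing whether a $P_1$ is present. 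Deleting an end of a $P_2$ creates a new $P_1$, which becomes a twin of an existing $P_1$ and is then absorbed.
\end{itemize}
Running through the three possibilities (no $P_1$; a $P_1$ with only $P_2$ and $P_4$ components; a $P_1$ together with $P_6$ components) gives exactly the equations defining $LF_1(n)$, $LF_2(n)$ and $LF_3(n)$, with weights $3$, $5$ and $7$ for $P_2$, $P_4$ and $P_6$.

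The main obstacle is this last accounting. I need to show that each graph in $LF(n)$ is actually minimal, checking every vertex deletion including its twin-merging effect. I also need to show that no other linear forest is minimal; in particular, I must explain why $P_6$ is allowed only in the presence of $P_1$, and why cost must be exact. I would carry this out by a case analysis on the deleted vertex (an end of $P_2$, an interior vertex of $P_4$ or $P_6$, or the $P_1$), computing the cost of the reduced forest in each case.
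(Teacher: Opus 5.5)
The paper quotes this theorem from Guzm\'an-Pro without proof, so there is no in-paper argument to compare with, and I am judging yours on its own. Most of it is sound: the reduction to point-determining graphs, the cost $c(L)=\sum_i(a_i+1)-1$, the non-linear-forest case via \Cref{thm:C4-consequences} and $E$, and the thresholds for $C_m$, $A$, $B$, $E$. The genuine gap is your claim that minimality ``pins down the exact cost $c(F)=n+1$''. That claim is false.

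The right criterion uses $\delta(F)=\min_v\,[c(F)-c((F-v)')]$, the smallest cost drop over single-vertex deletions. A point-determining linear forest $F$ is a minimal obstruction if and only if
$$c(F)-\delta(F)\le n\le c(F)-1.$$
For $F=P_1+m_2P_2+m_4P_4$ we get $\delta(F)=2$:
\begin{itemize}
\item deleting the $P_1$ lowers the cost by $2$;
\item deleting a vertex of a $P_2$ lowers it by $3$, since the new isolated vertex merges with the old one;
\item deleting an end of a $P_4$ lowers it by $2$, since the $P_3$ collapses to $P_2$;
\item deleting a support vertex of a $P_4$ lowers it by $2$, since the new $P_1$ merges.
\end{itemize}
So such an $F$ is minimal for two values of $n$. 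When $c(F)=n+1$, i.e.\ $3m_2+5m_4=n$, you get the $m_6=0$ part of $LF_3(n)$. When $c(F)=n+2$, i.e.\ $3m_2+5m_4=n+1$, you get exactly $LF_2(n)$. For example, $P_1+P_2$ is a minimal $P_2$-obstruction with $c=4=n+2$.

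Imposing $c(F)=n+1$ would therefore exclude all of $LF_2(n)$. Your split ``$LF_2$ means no $P_6$, $LF_3$ means a $P_6$ is present'' is also misaligned, because $LF_3(n)$ allows $m_6=0$. The value $\delta=1$ occurs in only two situations:
\begin{itemize}
\item $F=m_2P_2$ with no $P_1$, which gives $3m_2=n+2$, i.e.\ $LF_1(n)$;
\item a $P_1$ together with at least one $P_6$, where deleting an end of the $P_6$ drops the cost by one. This gives the $m_6\ge 1$ part of $LF_3(n)$.
\end{itemize}

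There are also smaller repairs to make.
\begin{itemize}
\item Your second-vertex argument for excluding $P_5$ and $P_m$ with $m\ge 7$ fails in two cases. If $F$ already has a $P_1$, the new isolated vertex merges and the cost drops by $2$. For $m=5$, $P_1+P_3$ reduces to $P_1+P_2$, of smaller cost. Delete the third vertex instead: $P_2+P_{m-3}$ has the same cost for $m=5$ and $m\ge 7$, whether or not a $P_1$ is present.
\item The second-vertex deletion is still needed, for a different purpose. When $F$ has no $P_1$, it gives a cost-preserving deletion in every $P_4$ and every $P_6$. That is why $LF_1(n)$ consists only of copies of $P_2$.
\item Your cycle computation gives $C_3$ as minimal only for $n\ge 2$. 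At $n=1$, $C_3$ contains $P_2\in LF_1(1)$, so the statement as written needs $n\ge 2$ for the $C_3$ term, and ``which gives $C(n)$'' should say so.
\item For $A$, each $A-v$ reduces to $P_4$ or $P_1+P_4$, never $P_5$. The threshold $6$ is unaffected.
\end{itemize}
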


The most notorious characteristic of \Cref{thm:path-obs} is the fact that every
forest minimal obstruction to fully $P_k$-colourability is either the $E$ graph
or a linear forest having as connected components only copies of $P_1, P_2, P_4$
or $P_6$.   Although not as nice, centipedes share a similar feature, as the
only forest minimal obstructions to fully $H$-colourability when $H$ is a
centipede are the graph $T_2$ (the leftmost graph in \Cref{fig:lekkerkerker})
and linear forests.   In the following results, we use the term
\textit{$H$-obstruction} to refer to an obstruction to full $H$-colouring.

\begin{proposition}
\label{pro:centi-linear}
    Let $k$ be a positive integer, and let $H$ be a $k$-centipede.   If $k < 5$,
    then every forest minimal $H$-obstruction is a linear forest.   If $k \ge
    5$, then $T_2$ is the only forest minimal $H$-obstruction which is not a
    linear forest.
\end{proposition}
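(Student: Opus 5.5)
Let $F$ be a forest minimal $H$-obstruction that is not a linear forest. My plan is to show that $F$ contains an induced $T_2$, so by minimality $F=T_2$. After that I check whether $T_2$ itself is fully $H$-colourable, and this check is what splits the cases $k<5$ and $k\ge 5$.

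\textbf{Step 1: reduce to an induced claw in a point-determining forest.} By minimality, $F$ is point-determining. If $F$ had false twins $u,v$, then $F-u$ would be fully $H$-colourable, and composing with the full homomorphism $F\to F-u$ would colour $F$. In a forest this means no vertex has two leaf neighbours and there are at most two isolated vertices (in fact at most one, by the same argument). Since $F$ is not a linear forest, it contains a vertex of degree at least $3$, hence an induced claw with centre $c$ and leaves $x_1,x_2,x_3$.

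\textbf{Step 2: grow the claw to $T_2$ or its sub-configurations.} Point-determination says at most one $x_i$ can be a leaf of $F$ attached to $c$. So at least two of the $x_i$ have further neighbours $y_i\ne c$, and since $F$ is a forest these $y_i$ are distinct and pairwise nonadjacent. This gives an induced $E$ (claw with two edges subdivided) or an induced $T_2$. If $E$ is already an $H$-obstruction, minimality forces $F=E$. So I must check $E$ against centipedes, since $E$ is not a linear forest.

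\textbf{Step 3: decide which of $E$ and $T_2$ are fully $H$-colourable.} $E$ is a point-determining tree, so a full homomorphism from $E$ to $H$ must be injective; indeed a full homomorphism of a point-determining graph is injective. Hence $E$ is fully $H$-colourable iff $E$ is an induced subgraph of $H$. In a centipede, take a spine vertex $s_i$ with spine neighbours $s_{i-1},s_{i+1}$ and its leaf $\ell_i$, and subdivide via $s_{i-2}$ and $s_{i+2}$ or via the leaves $\ell_{i\pm1}$. This shows $E$ embeds in $H$ once $k\ge3$, and similar small checks cover $k\le2$.

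Likewise $T_2$ is fully $H$-colourable iff it is an induced subgraph of $H$. This needs a degree-$3$ vertex whose three neighbours each have a further private neighbour. Spine vertex $s_i$ with neighbours $s_{i\pm1},\ell_i$ fails, because $\ell_i$ is a leaf of $H$. So $T_2$ never embeds in a centipede, and $T_2$ is always an $H$-obstruction. Its proper induced subgraphs are $E$, $P_5+P_1$-type linear forests and the like. Minimality of $T_2$ therefore requires each of these to be fully $H$-colourable, i.e.\ to embed into $H$ after removing twins.

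For $k\ge5$, I verify that every vertex-deleted subgraph of $T_2$ embeds in the $k$-centipede; the longest path in such a subgraph has $5$ vertices, and there is room along the spine. This makes $T_2$ minimal.

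For $k<5$, I exhibit a proper induced subgraph of $T_2$ that fails to embed, for example $P_5$ plus an isolated vertex, or $E$ with an extra leaf. Then $T_2$ is not minimal, and the minimal obstruction inside it is a linear forest or $E$. I must also rule out $E$ as minimal for small $k$: either $E$ embeds, or some linear-forest subgraph of $E$ already fails.

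\textbf{Step 4: conclude.} Combining the steps, for $k\ge5$ every non-linear forest obstruction contains $E$ (which embeds) and then $T_2$, so it equals $T_2$. For $k<5$ no such obstruction is minimal.

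The main obstacle is the bookkeeping in Step 2 when $E$ embeds but $F\ne T_2$. There I must show that any point-determining non-linear forest that is an $H$-obstruction contains $T_2$ or a non-embeddable linear forest. For this I use the fact that a non-linear forest is fully $H$-colourable iff its twin-free reduction embeds in $H$, together with the observation that in a centipede every degree-$3$ vertex has a leaf neighbour.
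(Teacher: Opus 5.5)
\textbf{There is a genuine gap, and it is exactly the case you flag as the ``main obstacle''.} Steps 1--3 correctly show that a non-linear point-determining forest $F$ contains an induced $E$ or $T_2$. They also settle the cases where $F$ contains an induced $T_2$, or where $k\le 2$. But for $k\ge 3$ you never treat the case where $F$ contains $E$ and no induced $T_2$. Your Step 4 claim that $F$ ``contains $E$ (which embeds) and then $T_2$'' is asserted, not proved. It cannot come from containment arguments alone: for $k=5$, the graph $E+3K_2$ is a point-determining, non-linear, $T_2$-free $H$-obstruction (it has $12$ vertices, the $5$-centipede only $10$).

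Minimality must be used through a vertex deletion. The missing step, which is how the paper argues, is an extension argument:
\begin{enumerate}
\item In a $T_2$-free point-determining forest, every vertex $u$ of degree at least $3$ has degree exactly $3$. It has one pendant leaf $v$ and two non-leaf neighbours $a_1,a_2$, which have further neighbours $b_1,b_2\neq u$.
\item By minimality, $F-v$ is fully $H$-colourable. It is still point-determining, since a new twin of $u$ would close a $4$-cycle. So $F-v$ is an induced subgraph of $H$.
\item The image of $u$ has two neighbours, so it is a spine vertex $s_i$. Its leaf $\ell_i$ is not in the image: a preimage of $\ell_i$ would be a neighbour of $u$ of degree $1$ in $F-v$, whereas $a_1,a_2$ have the neighbours $b_1,b_2$.
\item Sending $v$ to $\ell_i$ gives a full $H$-colouring of $F$, a contradiction.
\end{enumerate}
Your observation that every degree-$3$ vertex of a centipede carries a leaf explains why $T_2$ does not embed. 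What is needed here is different: that this leaf is \emph{unused} by an embedding of $F-v$.

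\textbf{Your witnesses for $k<5$ fail at $k=4$.} $P_5+K_1$ embeds in the $4$-centipede (take $\ell_1s_1s_2s_3\ell_3$ together with $\ell_4$). $E$ embeds for every $k\ge 3$. ``$E$ with an extra leaf'' is not a proper induced subgraph of $T_2$ at all. The vertex-deleted subgraph that fails for $k\in\{3,4\}$ is $T_2-c\cong 3K_2$, where $c$ is the centre, because it needs three pairwise non-consecutive spine vertices. This, not the length of the longest path, is also what makes $k\ge 5$ the threshold for the minimality of $T_2$.
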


\begin{proof}
    Clearly $T_2$ is an $H$-obstruction.   We have that $3K_2$ is a minimal
    $H$-obstruction when $k \in \{3, 4\}$, also $2K_2$ is a minimal
    $H$-obstruction when $k = 2$, and finally $K_1 + K_2$ is a minimal
    $H$-obstruction when $k = 1$, so $T_2$ is not minimal in these cases.  It is
    trivial to verify the minimality of $T_2$ when $k \ge 5$.

    Let $G$ be a point-determining tree.   Since support vertices in a point
    determining tree have exactly one attached leaf, if $G$ has a vertex of
    degree at least $4$, then it contains $T_2$ as an induced subgraph, and if
    $G$ has a vertex of degree $3$, then $G$ either contains $T_2$ as an induced
    subgraph, or the vertex of degree $3$ is a support vertex which is the
    central vertex in an induced copy of $E$ in $G$.
    
    Suppose that $G$ does not contain $T_2$ as an induced subgraph and $v$ is a
    leaf attached to a vertex of degree $3$ in $G$.  If $G$ is a connected
    component of a forest $F$ such that $F-v$ admits a full $H$-colouring, then
    such an $H$-colouring must map the support of $v$ to a vertex of degree $3$
    in $H$.   Hence, $F$ also admits a full $H$-colouring, so it is not a
    minimal $H$-obstruction.  We conclude that the only forest minimal
    $H$-obstruction with maximum degree greater than $2$, is $T_2$.
\end{proof}

In what remains of this section, we provide general characteristics of the
linear forests that are minimal $H$-obstructions when $H$ is a $k$-centipede.
Unlike the case of full path-colouring, the length of connected components of
minimal obstructions in this scenario is not restricted to a constant set of
possibilities.

Notice that any linear forest is an induced subgraph of a long enough centipede.
For a linear forest $L$ we define $\mu_\mathcal{C}(L)$ to be the least integer
$k$ such that the $k$-centipede contains $L$ as an induced subgraph.   We also
define $a_L$ to be the number of connected components of $L$ of order greater
than or equal to $4$.

\begin{lemma}
\label{lem:mu}
    Let $r$ and $s$ be positive integers.   For a linear forest $L$ such that $L
    = \sum_{s=1}^r P_{n_s}$, with $n_s \notin \{1,3\}$ for each $s \in
    \{1, \dots, r\}$, the following equalities hold
    \[
        \mu_\mathcal{C}(L) = |V_L| - a_L - 1 = (m-1) + \sum_{n_s > 2}(n_s - 2)
        + \sum_{n_s = 2}(n_s - 1).
    \]
\end{lemma}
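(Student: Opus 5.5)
The identity between the two right-hand expressions is bookkeeping; here $m$ denotes the number $r$ of components. Since $n_s \notin \{1,3\}$, the components with $n_s > 2$ are exactly the $a_L$ components of order at least $4$. Let $b_L$ be the number of components equal to $P_2$, so $r = a_L + b_L$. Then
\[
(r-1) + \sum_{n_s>2}(n_s-2) + b_L = |V_L| - 2a_L - 2b_L + b_L + a_L + b_L - 1 = |V_L| - a_L - 1 .
\]
The real content is therefore $\mu_\mathcal{C}(L) = (r-1) + \sum_{n_s \ge 4}(n_s-2) + b_L$, which I prove as a lower bound plus a matching construction. Write the $k$-centipede with spine $s_1, \dots, s_k$ and leaves $\ell_1, \dots, \ell_k$, where $\ell_i$ is adjacent only to $s_i$.

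\emph{Structure of induced paths.} Let $Q$ be a component of an induced copy of $L$ in the $k$-centipede. Since $Q$ has order at least $2$ and the leaves are pairwise non-adjacent, $Q$ contains a spine vertex. Every leaf of the centipede has degree $1$, so leaves of $Q$ can only be endpoints of $Q$. Removing them leaves a subpath of the spine path, that is, a set of consecutive spine vertices $s_i, \dots, s_j$. Hence $Q$ consists of $s_i, \dots, s_j$ together with possibly $\ell_i$ and possibly $\ell_j$, so the number of spine vertices of $Q$ is at least $|V_Q| - 2$. When $|V_Q| = 2$, $Q$ still uses at least one spine vertex, and this is where $P_2$ contributes $1$ rather than $0$. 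Call $I_Q = \{s_i, \dots, s_j\}$ the spine interval of $Q$.

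\emph{Lower bound.} Distinct components $Q, Q'$ are non-adjacent, so $I_Q$ and $I_{Q'}$ are disjoint and no vertex of $I_Q$ is adjacent to a vertex of $I_{Q'}$. Consecutive spine vertices are adjacent, so between any two intervals there is at least one spine vertex not in any interval. This is the step needing most care: I must check that these separating spine vertices cannot be used by $L$ at all. The ruled-out intervals already exclude this, and the hypothesis $n_s \ne 1$ is what stops a separator from hosting an isolated component. Ordering the $r$ intervals along the spine, there are at least $r-1$ gap vertices. Therefore
\[
k \ \ge\ \sum_{Q}|I_Q| + (r-1) \ \ge\ \sum_{n_s \ge 4}(n_s-2) + b_L + (r-1).
\]

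\emph{Upper bound.} Take $k$ equal to this quantity and place the components left to right. A $P_{n}$ with $n \ge 4$ is realised as $\ell_i, s_i, \dots, s_{i+n-3}, \ell_{i+n-3}$, using $n-2$ spine vertices. A $P_2$ is realised as $s_i\ell_i$, using one spine vertex. Between consecutive blocks I leave exactly one unused spine vertex, whose leaf is also unused. Each block is an induced path. Different blocks are non-adjacent because their spine vertices are separated by a gap vertex and a leaf is adjacent only to its own spine vertex. So the $k$-centipede contains $L$ as an induced subgraph, giving equality.
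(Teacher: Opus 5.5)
Your proof is correct and follows the same route as the paper. The paper uses the same bookkeeping for the second equality and the same left-to-right embedding (end leaves on each path, one skipped spine vertex between components). Its lower bound is the same count: at least $n_s-2$ spine vertices per $P_{n_s}$ with $n_s\ge 4$, at least one per $K_2$, and a separating spine vertex between consecutive components. You additionally justify why each component's spine vertices form an interval with at most the two end leaves attached, which the paper takes for granted.

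One small correction: your remark about the separators is muddled. Gap vertices lie outside every $I_Q$ and so are unused by definition. The actual role of $n_s \neq 1$ is to guarantee that every component contains a spine vertex and hence has a nonempty interval.
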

\begin{proof}
    For the second equality, notice that $|V_L| = \sum_{n_s > 2} n_s + \sum_{n_s
    = 2} n_s$, and hence $|V_L| - a_L - 1 = (-1) + \sum_{n_s > 2} (n_s - 1) +
    \sum_{n_s = 2} n_s$.   By adding $m-m$ to the right hand side of the
    equation we obtain the desired equality.

    For the remaining equality, by considering the embedding where each path in
    $L$ maximizes the number of leaves used, and skips one vertex from the spine
    between components, it is clear that $L$ is an induced subgraph of the
    centipede with spine of length $(m-1) + \sum_{n_s > 2}(n_s - 2) + \sum_{n_s
    = 2}(n_s - 1)$.   Similarly, any embedding of $L$ in the
    $\mu_\mathcal{C}(L)$-centipede must use at least one spine vertex for each
    $K_2$, and $n-2$ spine vertices for each $P_n$ with $n \ge 4$.   Also, spine
    vertices used by different components of $L$ must be at distance at least
    $2$.   Therefore, $\mu_\mathcal{C}(L) \ge (m-1) + \sum_{n_s > 2}(n_s - 2) +
    \sum_{n_s = 2}(n_s - 1)$.
\end{proof}

\Cref{lem:mu} directly yields the following result.

\begin{proposition}
\label{pro:unionmu}
    If $L_1$ and $L_2$ are disjoint linear forests having neither $K_1$ nor
    $P_3$ as connected components, then
    \[
        \mu_\mathcal{C}(L_1 + L_2) = \mu_\mathcal{C}(L_1) + \mu_\mathcal{C}
        (L_2) + 1.
    \]
    \hfill $\square$
\end{proposition}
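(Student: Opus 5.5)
The plan is to read everything off the closed formula of \Cref{lem:mu}, $\mu_\mathcal{C}(L) = |V_L| - a_L - 1$, valid for any linear forest with no $K_1$ or $P_3$ components. Write $L = L_1 + L_2$. Since $L_1$ and $L_2$ are disjoint and neither has $K_1$ or $P_3$ as a component, the components of $L$ are exactly those of $L_1$ together with those of $L_2$. In particular, $L$ also has no component equal to $K_1$ or $P_3$, so \Cref{lem:mu} applies to all three of $L_1$, $L_2$ and $L$.

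Next we check additivity of the two quantities in the formula. Clearly $|V_L| = |V_{L_1}| + |V_{L_2}|$. Also $a_L = a_{L_1} + a_{L_2}$, because $a$ counts components of order at least $4$ and the component multiset of $L$ is the disjoint union of those of $L_1$ and $L_2$.

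Substituting gives
\[
\mu_\mathcal{C}(L) = |V_{L_1}| + |V_{L_2}| - a_{L_1} - a_{L_2} - 1 = \bigl(\mu_\mathcal{C}(L_1)+1\bigr) + \bigl(\mu_\mathcal{C}(L_2)+1\bigr) - 1,
\]
which is the claimed identity. The $+1$ simply reflects that the constant $-1$ appears once in the formula for $L$ but twice across $L_1$ and $L_2$. Combinatorially, it corresponds to the single skipped spine vertex needed between the embedding of $L_1$ and that of $L_2$.

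There is no real obstacle. The only point needing care is confirming that the hypotheses of \Cref{lem:mu} transfer to $L_1 + L_2$, which holds because the excluded components $K_1$ and $P_3$ are absent from both parts.
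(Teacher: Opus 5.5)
Your proof is correct and follows the paper's route exactly. The paper simply notes that the proposition follows directly from \Cref{lem:mu}. You carried out that substitution explicitly: $L_1+L_2$ still satisfies the lemma's hypotheses, and both $|V|$ and $a$ are additive over the disjoint union, so the surplus $-1$ produces the $+1$.
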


Let $L$ be a point-determining linear forest, and let $v$ be a vertex of $L$. If
$L$ is isomorphic to $K_2$, or if it is isomorphic to $P_4$ and $v$ is a support
vertex, then $\mu_\mathcal{C}(L-v) = \mu_\mathcal{C}(L)$.   It is not hard to
observe that in any other case, $\mu_\mathcal{C}(L-v) < \mu_\mathcal{C}(L)$.
When $L$ is disconnected, $\mu_\mathcal{C}(L-v) = \mu_\mathcal{C}(L) - 2$ unless
$L$ is isomorphic to $2K_2$, or $v$ is a leaf not in a $K_2$, or $v$ is a
support vertex in a $P_4$; in such cases, $\mu_\mathcal{C}(L-v) =
\mu_\mathcal{C}(L) - 1$.  When $L$ is a path of length at least $5$, the
deletion of a leaf or a support vertex reduces the value of $\mu_\mathcal{C}(L)$
precisely by $1$, and the deletion of any other vertex decreases the value of
$\mu_\mathcal{C}(L)$ by $2$.

\begin{lemma}
\label{lem:replace}
    Let $k$ be a positive integer, let $H$ be a $k$-centipede, and let $L$ be a
    point-determining linear forest without isolated vertices. If $L_1$ is an
    induced subgraph of $L$ consisting of a subset of connected component, and
    $L_2$ is a point-determining linear forest such that $\mu_\mathcal{C}(L_1) =
    \mu_\mathcal{C}(L_2)$, then
    \begin{enumerate}
        \item $L$ is an induced subgraph of $G$ if and only if $(L-L_1) + L_2$
            is an induced subgraph of $G$.

        \item If neither $L_1$ nor $L_2$ is isomorphic to $rK_2$ for some
            positive integer $r$, then $L$ is a minimal $H$-obstruction if and
            only if $(L-L_1) + L_2$ is a minimal $H$-obstruction.

        \item If $L$ is a minimal $H$-obstruction and $L_1$ is isomorphic to
            $rK_2$ for some positive integer $r$, but $L$ is not isomorphic to
            $sK_2$ for some positive integer $s$, then $(L-L_1) + L_2$ is a
            minimal $H$-obstruction.
            
    \end{enumerate}
\end{lemma}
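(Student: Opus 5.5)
Throughout, I read the undefined $G$ in the first item as the centipede $H$ (equivalently, any centipede). I also assume, as in the first item's intended use, that $L_2$ has no isolated vertices, so that \Cref{pro:unionmu} applies to every forest involved.

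The whole argument rests on one reduction. If $M$ is point-determining, any full homomorphism $M\to H$ is injective, because two vertices with the same image would be false twins. So $M$ is fully $H$-colourable if and only if $M$ is an induced subgraph of $H$, that is, if and only if $\mu_\mathcal{C}(M)\le k$.

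\emph{First item.} If $L_1=L$ there is nothing to prove. Otherwise, apply \Cref{pro:unionmu} twice, which is legitimate since no component is $K_1$ or $P_3$:
\[
\mu_\mathcal{C}(L)=\mu_\mathcal{C}(L-L_1)+\mu_\mathcal{C}(L_1)+1=\mu_\mathcal{C}(L-L_1)+\mu_\mathcal{C}(L_2)+1=\mu_\mathcal{C}((L-L_1)+L_2).
\]
Since a linear forest embeds in the $k$-centipede exactly when its $\mu_\mathcal{C}$ is at most $k$, the first item follows.

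\emph{Minimality criterion.} Next I would reduce minimality to a numerical condition. Every proper induced subgraph lies inside some $L-v$, so $L$ is a minimal $H$-obstruction if and only if $\mu_\mathcal{C}(L)>k$ and $L-v$ is fully $H$-colourable for every vertex $v$. By the reduction above and the comment before \Cref{pro:fullFObs}, $L-v$ is fully $H$-colourable if and only if its point-determining reduction $(L-v)'$ (delete false twins) satisfies $\mu_\mathcal{C}((L-v)')\le k$. Define the \emph{drop} of $v$ as $\mu_\mathcal{C}(L)-\mu_\mathcal{C}((L-v)')$ and let $\delta(L)$ be the minimum drop over all $v$. Then $L$ is minimal exactly when
\[
\mu_\mathcal{C}(L)>k\ge\mu_\mathcal{C}(L)-\delta(L),
\]
so minimality depends only on the pair $(\mu_\mathcal{C}(L),\delta(L))$.

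\emph{Computing $\delta$.} Using the case analysis stated after \Cref{pro:unionmu}, extended by \Cref{lem:mu} to account for the false twins removed in passing to $(L-v)'$, I would verify the following. Suppose $L$ has a component that is not a $K_2$, that is, a $P_n$ with $n=2$ or $n\ge4$ excluded, so $n\ge 4$. Deleting a leaf of that component drops $\mu_\mathcal{C}$ by exactly $1$, and no deletion drops it by $0$ except in the connected cases $L\cong K_2$ or $L\cong P_4$ with $v$ a support vertex. Hence, outside those small cases, $\delta(L)=1$ whenever $L$ is not of the form $rK_2$. When $L\cong rK_2$, every deletion leaves a $K_1$ together with $(r-1)K_2$, and the drop is $2$ for $r\ge3$, $1$ for $r=2$ and $0$ for $r=1$, so $\delta$ genuinely depends on the structure. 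Establishing this drop table carefully is the main obstacle. The delicate points are that the $K_1$ created by deleting a vertex of a $K_2$ is not covered by \Cref{pro:unionmu}, and that deletions can create $P_3$ components, which must first be collapsed to $K_2$ before the formula of \Cref{lem:mu} applies. The small connected cases $K_2$ and $P_4$ also need separate handling.

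\emph{Second and third items.} By the first item, $\mu_\mathcal{C}$ agrees on $L$ and $(L-L_1)+L_2$. For the second item, neither $L_1$ nor $L_2$ is of the form $rK_2$, so each contains a component with at least four vertices. Hence both $L$ and $(L-L_1)+L_2$ contain a non-$K_2$ component, both have $\delta=1$, and the criterion gives the equivalence. For the third item, $L$ is not of the form $sK_2$, so $L$ already has a non-$K_2$ component, and that component lies in $L-L_1$ because $L_1\cong rK_2$. It survives into $(L-L_1)+L_2$, so both forests have $\delta=1$ and equal $\mu_\mathcal{C}$. Minimality of $L$ therefore transfers to $(L-L_1)+L_2$.
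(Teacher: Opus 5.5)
Your reduction of minimality to the pair $(\mu_\mathcal{C}(L),\delta(L))$ is sound, and item 1 is fine. The hypothesis you added, that $L_2$ has no isolated vertex, is genuinely needed. For $L=2K_2+P_4$, $L_1=2K_2$, $L_2=P_4+K_1$ one has $\mu_\mathcal{C}(L_1)=\mu_\mathcal{C}(L_2)=3$, yet $\mu_\mathcal{C}(L)=6$ while $\mu_\mathcal{C}(2P_4+K_1)=5$.

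The step that fails is your drop table. It is not true that a deletion leaves $\mu_\mathcal{C}$ unchanged only when $L\cong K_2$ or $L\cong P_4$. Deleting the middle vertex of a $P_5$ component turns it into $2K_2$, and $\mu_\mathcal{C}(2K_2)=3=\mu_\mathcal{C}(P_5)$. No isolated vertex is created, so by \Cref{pro:unionmu} the drop is $0$ whatever the other components are. Hence every linear forest with a $P_5$ component has $\delta=0$ and is never a minimal obstruction, so your claim ``$\delta(L)=1$ whenever $L$ is not of the form $rK_2$'' is false.

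If you carry out the computation with \Cref{lem:mu}, use the following for a linear forest $M$ whose components have order $2$ or at least $4$. When $M$ has at least two components, $\mu_\mathcal{C}(K_1+M)=\mu_\mathcal{C}(M)$, because the leaf at a gap spine vertex is free. When $M$ is connected, $\mu_\mathcal{C}(K_1+M)=\mu_\mathcal{C}(M)+1$. You then find that, for $L$ with a component of order at least $4$, $\delta(L)=1$ exactly when $L\not\cong P_4$ and $L$ has no $P_5$ component.

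This cannot be patched within the given hypotheses, because items 2 and 3 are false as stated. The paper's own proof fails at the same point: it claims $\mu_\mathcal{C}(L_2-u)\le\mu_\mathcal{C}(L_2)-1$ for every vertex $u$ whenever $L_2\not\cong P_4$.

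For item 2, take $k=4$, $L=L_1=P_7$ and $L_2=P_5+K_2$. Both have $\mu_\mathcal{C}=5$ and neither is of the form $rK_2$. $P_7$ is a minimal obstruction for the $4$-centipede, but $P_5+K_2$ is not: deleting the middle vertex of its $P_5$ leaves the point-determining forest $3K_2$, with $\mu_\mathcal{C}(3K_2)=5>4$.

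For item 3, take $k=5$, $L=2K_2+P_4$, $L_1=2K_2$ and $L_2=P_5$. Here $\mu_\mathcal{C}(L)=6$ and $\delta(L)=1$, so $L$ is minimal. But deleting the middle vertex of the $P_5$ in $P_4+P_5$ leaves $P_4+2K_2$, which still has $\mu_\mathcal{C}=6>5$.

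Your $(\mu_\mathcal{C},\delta)$ argument does go through once you also assume that $L_2$ has no $P_5$ component, and, for the equivalence in item 2, that $L_1$ has none either. A $P_5$ component of $L-L_1$ is harmless, since it gives $\delta=0$ on both sides. That corrected version covers the later uses in the paper, where $L_2$ is built from copies of $P_4$ and $K_2$.
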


\begin{proof}
    For the first item, observe that one can consider an embedding of $L$ in
    $G$, and simply interchange $L_1$ for $L_2$ in order to obtain an embedding
    of $(L-L_1) + L_2$, and vice versa.   This also follows directly from
    \cref{pro:unionmu}.

    For the second item, we know from the first one that $L$ is an
    $H$-obstruction if and only if $(L-L_1) + L_2$ is.   Suppose that $L$ is
    minimal, and let $v$ be a vertex of $L$ not in $V_{L_1}$.   Again,
    interchanging $L_1$ for $L_2$ in an embedding of $L-v$ in $G$, we obtain an
    embedding of $((L-L_1) + L_2)-v$ in $G$.  Now, because $L_1 \not \cong
    rK_2$, there is a vertex $v$ of $L_1$ such that $\mu_\mathcal{C}(L_1-v) =
    \mu_\mathcal{C}(L_1) - 1$.   Since $L_2$ is isomorphic to $P_4$ if and only
    if $L_1$ is isomorphic to $P_4$ too, in which case the result is trivial, we
    may assume that $L_2 \not \cong P_4$, and hence, for any vertex $u$ of $L_2$
    we have $\mu_\mathcal{C}(L_2 - u) \le \mu_\mathcal{C}(L_2) - 1 =
    \mu_\mathcal{C}(L_1) - 1$.   Therefore, we can once again interchange
    $L_1-v$ for $L_2-u$ in an embedding of $L-v$ in $G$ to obtain an embedding
    of $((L-L_1) + L_2) - u$ in $G$.   Hence, $(L-L_1) + L_2$ is a minimal
    $H$-obstruction.   The remaining implication is analogous.

    For the third item, and by the choice of $L$, there is a vertex $v$ in $V_L
    \setminus V_{L_1}$ such that $\mu_\mathcal{C}(L-v) = \mu_\mathcal{C}(L)-1$.
    Hence, for any vertex $u$ in $L_2$ such that $\mu_\mathcal{C}(L_2-u) <
    \mu_\mathcal{C}(L_2)$, it follows from \cref{pro:unionmu} that
    $\mu_\mathcal{C}(((L-L_1) + L_2)-u) \le \mu_\mathcal{C}(L-v)$.   So, unless
    $L_2$ is isomorphic to $P_4$ and $u$ is a support vertex, we already have
    the desired result.   In the aforementioned case, it suffices to notice that
    $\mu_\mathcal{C}(((L-L_1) + L_2)-u) = \mu_\mathcal{C}(((L-L_1) + L_2)) - 1$,
    because the isolated vertex in $L_2 - u$ does not use a spine vertex.
\end{proof}

In the final result of this section we identify special families of minimal
$H$-obstructions when $H$ is a $k$-centipede.   These families are shown to
reach the minimum and maximum orders possible, so they provide a well determined
range where other minimal obstructions can exist.

\begin{proposition}
\label{pro:centi-max-min}
    Let $k$ be a positive integer, $k \ge 3$, and let $H$ be a $k$-centipede.
    \begin{enumerate}
        \item $(\lceil \frac{k}{2} \rceil + 1) K_2$ is a linear forest minimal
            $H$-obstruction of minimum order.

        \item If $k = 3r$ for some integer $r$, then $rP_4 + K_2$ is a linear
            forest minimal $H$-obstruction of maximum order.

        \item If $k = 3r+1$ for some integer $r$, then $(r+1)P_4$ is a linear
            forest minimal $H$-obstruction of maximum order.

        \item If $k = 3r+2$ for some integer $r$, then $rP_4 + 2K_2$ is a linear
            forest minimal $H$-obstruction of maximum order.
    \end{enumerate}
\end{proposition}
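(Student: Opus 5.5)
The whole argument runs through the parameter $\mu_\mathcal{C}$. Since the $k$-centipede $H$ is point-determining, a point-determining linear forest $L$ is fully $H$-colourable exactly when it is an induced subgraph of $H$, that is, when $\mu_\mathcal{C}(L) \le k$. This fails only for linear forests containing $P_3$ or $K_1$ components, which we avoid; their false twins force the point-determining assumption. So a point-determining linear forest $L$ without $K_1$ or $P_3$ components is a minimal $H$-obstruction if and only if $\mu_\mathcal{C}(L) \ge k+1$ and $\mu_\mathcal{C}(L-v) \le k$ for every vertex $v$. Here $L-v$ may fail to be point-determining, but then it maps fully onto its point-determining reduction, which is a smaller linear forest. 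I will use \Cref{lem:mu} in the form $\mu_\mathcal{C}(L) = |V_L| - a_L - 1$, where $a_L$ counts components of order at least $4$. I will also use the deletion rules stated after \Cref{pro:unionmu}: deleting a leaf or support vertex of a long path lowers $\mu_\mathcal{C}$ by $1$, and deleting a vertex of a disconnected $L$ lowers it by $1$ or $2$.

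For the membership claims, \Cref{lem:mu} gives the following values.
\begin{itemize}
\item $\mu_\mathcal{C}(mK_2) = 2m-1$, so for $m = \lceil k/2\rceil + 1$ this is at least $k+1$. Deleting any vertex leaves $(m-1)K_2 + K_1$, which embeds in the $(2m-3)$-centipede because the isolated vertex can be a leaf. Since $2m-3 \le k$, the forest is minimal.
\item $\mu_\mathcal{C}(rP_4 + K_2) = 6r - r - 1 = 3r+1$ when $k = 3r$.
\item $\mu_\mathcal{C}((r+1)P_4) = 3r+2$ when $k = 3r+1$.
\item $\mu_\mathcal{C}(rP_4 + 2K_2) = 3r+3$ when $k = 3r+2$.
\end{itemize}
In each of the last three cases $\mu_\mathcal{C} = k+1$. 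So it suffices that deleting any vertex lowers $\mu_\mathcal{C}$ by at least $1$. The only exceptions in the paper's list are $L \cong K_2$, and $L \cong P_4$ with $v$ a support vertex. For a disconnected $L$ the decrease is at least $1$, and deleting a $P_4$ support vertex leaves a $K_1$ that uses no spine vertex, as observed in the proof of \Cref{lem:replace}. The small cases ($r=0$, or $k = 3, 4$) I check directly.

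For minimum order, let $L$ be any linear forest minimal obstruction. If it has a $K_1$ or $P_3$ component it is not point-determining, and minimal obstructions are point-determining, so we may assume it has neither. Then $|V_L| = \mu_\mathcal{C}(L) + a_L + 1 \ge k + 2 + a_L$, and since $\mu_\mathcal{C}(L) \le 2|V_L|/2 - 1$ in a suitable sense, the minimum order should be at least $2(\lceil k/2\rceil+1)$. More carefully: each component $P_n$ contributes $n$ vertices and about $n-1$ to $\mu_\mathcal{C}$ when $n = 2$, but only $n-2$ when $n \ge 4$. So the ratio of $\mu_\mathcal{C}$ to order is maximized by $K_2$ components. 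Writing $|V_L| = 2c_2 + \sum_{n_s \ge 4} n_s$ and $\mu_\mathcal{C} = (c-1) + c_2 + \sum(n_s - 2)$, we get $|V_L| \ge \mu_\mathcal{C} + 1 + (\text{number of components}) \ge \ldots$. Combining this with $\mu_\mathcal{C} \ge k+1$ and parity yields $|V_L| \ge 2\lceil k/2\rceil + 2$.

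For maximum order, I use minimality: $\mu_\mathcal{C}(L - v) \le k$ for every $v$. Every long path in $L$ has a leaf whose deletion lowers $\mu_\mathcal{C}$ by $1$, and every $K_2$ has a vertex whose deletion leaves $K_1$, which lowers it by $2$ in the disconnected case. So one gets $\mu_\mathcal{C}(L) \le k+2$, and $\mu_\mathcal{C}(L) = k+1$ whenever a long path is present. Then $|V_L| = \mu_\mathcal{C}(L) + a_L + 1$, and maximizing means maximizing $a_L$. Each component of order at least $4$ costs at least $3$ in $\mu_\mathcal{C}$ (order $4$ minus $2$, plus $1$ for the gap), while contributing only one to $a_L$. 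This gives $a_L \le \lfloor (k+2)/3 \rfloor$, with $P_4$ components being optimal. A case split on $k \bmod 3$ then matches the three stated forests: the leftover budget of $0$, $1$ or $2$ is filled by $K_2$'s or absorbed.

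The main obstacle is this maximality bound, especially pinning down $\mu_\mathcal{C}(L) \le k+1$ rather than $k+2$ when $L$ is all $K_2$'s. I would handle that case separately, since an all-$K_2$ forest has order only about $k+2$, which is below the claimed maximum.
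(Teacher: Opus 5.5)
Your route is different from the paper's. The paper uses \Cref{lem:replace} to push an arbitrary obstruction into the normal form $sP_4+tK_2$ with $t\le 2$. You instead fix $\mu_\mathcal{C}(L)=k+1$ by leaf deletion and maximise $a_L$ in $|V_L|=\mu_\mathcal{C}(L)+a_L+1$ by a cost count: $P_n$ costs $n-1$, $K_2$ costs $2$, and the total is $k+2$. This is more direct, and it gives the correct bounds for $k\equiv 0,1 \pmod 3$. It does not give item 4, however.

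For $k=3r+2$ your count only yields $a_L\le\lfloor (3r+4)/3\rfloor=r+1$, hence $|V_L|\le 4r+5$. The leftover budget of $1$ cannot be filled by a $K_2$. ``Absorbing'' it produces $rP_4+P_5$, which has $\mu_\mathcal{C}=3r+3=k+1$ and order $4r+5>|V(rP_4+2K_2)|$. So your argument as stated contradicts the claimed maximum.

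The missing step is a further use of minimality. Deleting the centre of a $P_5$ component leaves $2K_2$, and $\mu_\mathcal{C}(2K_2)=3=\mu_\mathcal{C}(P_5)$. By \Cref{pro:unionmu}, the vertex-deleted forest is therefore still point-determining with $\mu_\mathcal{C}=k+1$, so it is an obstruction. Hence no minimal obstruction has a $P_5$ component. This also shows that the deletion rules after \Cref{pro:unionmu} that you quoted (every non-exceptional deletion lowers $\mu_\mathcal{C}$, by $1$ or $2$ in the disconnected case) fail for this vertex.

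Once $P_5$ is excluded, every long path costs $3$ or at least $5$. So each long path has excess $0$ or at least $2$ over $3$, and $K_2$'s cost an even amount. A total excess of exactly $1$ is therefore impossible. This forces $a_L\le r$ and gives the bound $4r+4$.

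Two smaller points also need repair.

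First, a single $K_1$ component does not violate point-determination. $P_{k+2}+K_1$ is a point-determining minimal obstruction, so you cannot simply discard isolated vertices. You need to show it is the only minimal obstruction with an isolated vertex. If $L=L'+K_1$ and $L'$ has at least two components, then $\mu_\mathcal{C}(L')\le k$ and the leaf of a skipped spine vertex hosts the $K_1$. So $L'$ is a single path, which forces $L'=P_{k+2}$. You must then check that its order $k+3$ lies between the claimed minimum and maximum, which uses $k\ge 3$.

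Second, in the minimum-order part the inequality $|V_L|\ge\mu_\mathcal{C}(L)+1+(\text{number of components})$ is false; it already fails for $mK_2$. What you need is just $|V_L|=\mu_\mathcal{C}(L)+a_L+1\ge k+2$. Equality holds only when $a_L=0$ and $\mu_\mathcal{C}(L)=k+1$, that is, $L=mK_2$ with $2m=k+2$, which is impossible for odd $k$.
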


\begin{proof}
    Clearly, all of the proposed linear forests are minimal $H$-obstructions.
    The only linear forest minimal $H$-obstruction which is a tree is $P_{k+3}$,
    and the only minimal $H$-obstruction having $K_1$ as a connected component
    is $P_{k+2} + K_1$. Any other minimal obstruction $L$ is either
    disconnected, or does not have trivial components.   We consider some cases
    for $L$.
    
    If $L$ has a connected component $T$ having $\mu_\mathcal{C}(T) = 3r + 2$
    for some integer $r$, then \cref{lem:replace} shows that $(L-T) + (r+1)P_4$
    is also a minimal obstruction.   As $T$ has $3(r+1) + 1$ vertices, and the
    order of $(r+1)P_4$ is $4r+4$, we have $(L-T) + (r+1)P_4$ has order larger
    than $L$.   If $L$ has connected components $T_1$ and $T_2$ with
    $\mu_\mathcal{C}(T) = 3r$ and $\mu_\mathcal{C}(T) = 3s + 1$, then
    \cref{pro:unionmu} implies that $\mu_\mathcal{C}(T_1 + T_2) = 3(r+s) + 2$,
    so from \cref{lem:replace} we obtain that $(L-(T_1+T_2)) + (r+s+1)P_4$ is
    also a minimal obstruction.   The orders of $T_1 + T_2$ and $(r+s+1)P_4$ are
    $3(r+s+1) + 2$ and $4(r+s+1)$, so clearly $(L-(T_1+T_2)) + (r+s+1)P_4$ has
    order larger than $L$.  If $L$ has connected components $T_1, T_2$ and $T_3$
    such that $\mu_\mathcal{C}(T_1) \equiv \mu_\mathcal{C}(T_2) \equiv
    \mu_\mathcal{C}(T_3)$ modulo $3$, then $\mu_\mathcal{C}(T_1 + T_2 + T_3)
    \equiv 2$ modulo $3$, so this case can be dealt similarly to the previous
    two cases to replace $T_1 + T_2 + T_3$ with a copy of $rP_4$ in $L$,
    resulting in a larger minimal $H$-obstruction.   If $L$ has a component
    $T_1$ such that $\mu_\mathcal{C}(T_1) = 3r+1$ then, using
    \cref{lem:replace}, it can be substituted by $rP_4 + K_2$ to obtain a larger
    minimal obstruction; an analogous argument applies when there are components
    $T_1$ and $T_2$ such that $\mu_\mathcal{C}(T_1 + T_2) \equiv 1$ modulo $3$.
    Finally, if there is a component $T_1$ having $\mu_\mathcal{C}(T_1) = 3r$,
    then \cref{lem:replace} again implies that $T_1$ can be replaced by
    $(r-1)P_4 + 2K_2$ to obtain a minimal $H$-obstruction with at least the same
    order as $L$.   Using the reductions described in this paragraph, for any
    forest minimal $H$-obstruction another minimal $H$-obstruction of larger (or
    equal) order can be obtained, such that each of its components is a $P_4$,
    except maybe for two of them, which are copies of $K_2$.

    An analogous argument shows that the minimal $H$-obstruction consisting of
    copies of $K_2$ has minimum order among minimal $H$-obstructions which are
    linear forests.
\end{proof}

Our results up to this point are sufficient to obtain a result similar to
Corollary 10 in \cite{guzmanDM347}.

\begin{corollary}
\label{cor:centi-fam}
    Let $\mathcal{C}$ be the family of all centipedes.   A graph $G$ admits a
    full $\mathcal{C}$-colouring if and only if it is free from every cycle of
    length different from $4$ and the graphs $A, B$ and $T_2$ (the leftmost
    graph in \Cref{fig:lekkerkerker}).
\end{corollary}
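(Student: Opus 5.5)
Necessity is immediate. Every centipede is a tree, so a fully $\mathcal{C}$-colourable graph is fully forest-colourable. By \Cref{itm:forest} of \Cref{thm:C4-consequences}, such a graph is therefore free of $A$, $B$ and every cycle of length other than $4$. For $T_2$: it is point-determining, so any full homomorphism from $T_2$ is injective and must embed $T_2$ as an induced subgraph of the target. This is impossible in a centipede, because every vertex of a centipede has degree at most $3$, and each degree-$3$ spine vertex has a leaf neighbour, whereas the centre of $T_2$ has three non-leaf neighbours. (This is the argument behind the $k\ge 5$ case of \Cref{pro:centi-linear}.)

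For sufficiency, let $G$ avoid all the listed graphs. First apply \Cref{thm:C4-consequences}\,(\ref{itm:forest}) to get that $G$ is fully forest-colourable. Let $G'$ be obtained from $G$ by deleting false twins, keeping one representative per class. Then $G'$ is a point-determining forest, and $G$ is fully $G'$-colourable. It is therefore enough to show that $G'$ is fully $\mathcal{C}$-colourable, i.e.\ that it admits a full homomorphism into some centipede. Since $G'$ is an induced subgraph of $G$, it is $T_2$-free.

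Next I would analyse each component $T$ of $G'$. It is a point-determining tree, so (as in the proof of \Cref{pro:centi-linear}) every support vertex has exactly one attached leaf. Because $T$ is $T_2$-free, no vertex has degree $\ge 4$, and every vertex of degree $3$ has a leaf neighbour. I want to conclude that $T$ is an induced subgraph of a centipede. The key step is to show that the degree-$3$ vertices, together with the degree-$2$ vertices on paths between them, lie on a single path. Then $T$ is a caterpillar whose spine vertices each carry at most one leaf, and it embeds in a centipede by using spine vertices and their pendant leaves. To prove this, I would use the fact that each degree-$3$ vertex $v$ has a leaf and two other neighbours. Removing all leaves gives a tree $T^-$. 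A branching vertex of $T^-$ would be a vertex with three non-leaf neighbours in $T$, hence the centre of an induced $T_2$. So $T^-$ is a path, and $T$ is a caterpillar with at most one leaf per spine vertex. I must also treat the ends of the spine correctly: an end of $T^-$ may carry at most one leaf by point-determination, and a bare path $P_n$ embeds as well. This degree and branching argument is the main point to check carefully.

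Finally, I would assemble the components. The embeddings of the components into centipedes can be concatenated into one long centipede, separating consecutive pieces by at least one unused spine vertex (whose leaf is also unused), exactly as in the embedding in the proof of \Cref{lem:mu}. This yields an induced copy of $G'$ in a single centipede $H$. It remains to turn an induced embedding into a full homomorphism; I expect this to be the main obstacle. An induced subgraph of $H$ is not automatically fully $H$-colourable, but it is a full colouring into itself, i.e.\ into the induced subgraph. So I would instead map into the graph $G'$ itself, viewed as an induced subgraph of a centipede. I then need $G'$ itself to be fully colourable into a genuine centipede. To achieve this, I would map surplus spine vertices of $H$ to nothing and add pendant leaves where missing. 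More simply: any induced subgraph of a centipede with no isolated vertices among components can be extended so that $G'$ maps fully onto a centipede, by identifying each component with a sub-centipede.
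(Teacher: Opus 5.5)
Your argument is sound except for the last paragraph. There, the ``main obstacle'' does not exist, and the workaround you propose is false.

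A full homomorphism is not required to be surjective. If $G'$ is an induced subgraph of a centipede $H$, the inclusion $V_{G'}\hookrightarrow V_H$ satisfies $uv\in E_{G'}$ if and only if $uv\in E_H$. That is exactly the definition of a full $H$-colouring. The paper uses this fact itself: see the remark after \Cref{cor:tree-blowup} that a full $F$-colouring is also a full $T$-colouring when $F=T-v$, and, implicitly, all of the reasoning with $\mu_\mathcal{C}$.

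Your alternative claim, that $G'$ can be made to map fully \emph{onto} a centipede, fails in general. $P_5$ is point-determining, so every full homomorphism from it is injective. It has odd order, while every centipede has even order, so no full homomorphism from $P_5$ onto a centipede exists. Yet $P_5$ is an induced subgraph of the $3$-centipede.

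So delete that paragraph and conclude directly. The map $G\to G'$ sending each vertex to its representative is full, the inclusion $G'\hookrightarrow H$ is full, and their composite is a full $H$-colouring of $G$.

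With that correction your proof is complete, and it takes a different route from the paper's.

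\textbf{The paper's route.} It proves sufficiency by counting obstructions. By \Cref{pro:centi-linear}, for $k\ge 5$ the only forest minimal obstructions to full $k$-centipede-colouring are $T_2$ and linear forests. By \Cref{pro:centi-max-min}, those linear forests have order at least $2(\lceil k/2\rceil+1)$. Hence a long enough centipede has no minimal obstruction inside $G$.

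\textbf{Your route.} You argue constructively. After passing to the point-determining forest $G'$, $T_2$-freeness forces each component to be a caterpillar with at most one leaf per spine vertex. These components embed into one centipede, separated by unused spine vertices.

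\textbf{What each buys.} Your version has these advantages:
\begin{itemize}
\item it is self-contained, using only the forest case of \Cref{thm:C4-consequences};
\item it avoids the $\mu_\mathcal{C}$ machinery and \Cref{lem:replace};
\item it makes explicit why only forest obstructions matter, whereas the paper leaves implicit that minimal obstructions inside $G$ are point-determining and hence forests;
\item it produces an explicit centipede of size linear in $|V_G|$.
\end{itemize}
It is close in spirit to the direct argument alluded to in the remark after the corollary. The paper's route instead presents the corollary as a consequence of its description of the minimal obstructions for a single centipede, paralleling the known result for paths.
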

\begin{proof}
    If $G$ admits a full $\mathcal{C}$-colouring, then the result follows
    directly from \Cref{thm:C4-consequences,pro:centi-linear}.  Conversely, let
    $G$ be a graph free from $A, B, T_2$ and every cycle of length different
    from $4$.  When $H$ is a $k$-centipede the first item in
    \Cref{pro:centi-max-min} implies that the order of the linear forest minimal
    obstructions to full $H$-colouring grow with respect to $k$.   Hence, there
    is a large enough integer $\ell$ such that if $H$ is the $\ell$-centipede,
    then $G$ is free from every minimal obstruction to full $H$-colouring.
\end{proof}

Observe that if we apply \Cref{lem:C4} to the family of the forests in which
every connected component is a caterpillar, we obtain the same set of minimal
obstructions described in \Cref{cor:centi-fam}.   This points out that a graph
admits a full caterpillar-colouring if and only if it admits a full
centipede-colouring (which can also be easily proved directly).

\section{Future directions of research}
\label{sec:conclusions}

Let $\mathcal{F}$ be a hereditary class of graphs.  Recall that we use
$\mathcal{F}$-\textsc{Recognition} to denote the recognition problem for
$\mathcal{F}$.   We now state the problem presented at the beginning of
\Cref{sec:main}.

\begin{problem}
\label{prb:npc-poly}
    Is there a hereditary family $\mathcal{F}$ such that
    $\mathcal{F}$-\textsc{Recognition} is NP-complete and \fgc{\mathcal{F}} is
    polynomial-time solvable?
\end{problem}

We can reduce the search space when looking for candidates to obtain a positive
answer for \Cref{prb:npc-poly}.   Suppose that $M$ is a polynomial time
algorithm to solve \fgc{\mathcal{F}}.   Consider an input graph $G$ for
$\mathcal{F}$-\textsc{Recognition} and run $M$ on $G$.   If $M$ answers
\texttt{false}, then $G$ is not in $\mathcal{F}$, so a possible algorithm to
solve $\mathcal{F}$-\textsc{Recognition} can safely answer \texttt{false}.   If
$M$ answers \texttt{true}, then it could still be the case that $G$ does not
belong to $\mathcal{F}$, but this could only happen if $G$ contains a graph in
$\obs(\mathcal{F})$ which is not point-determining.   Therefore, if there are
finitely many elements of $\obs(\mathcal{F})$ which are not point-determining,
we could brute-force test whether any of them is an induced subgraph of $G$. The
procedure we just described is a polynomial-time reduction from
$\mathcal{F}$-recognition to \fgc{\mathcal{F}} when $\obs(\mathcal{F})$ has
finitely many elements which are not point-determining.   Therefore, a positive
answer to \Cref{prb:npc-poly} must involve a hereditary class having infinitely
many not point-determining minimal obstructions.

In the algorithmic setting, in \Cref{sec:algo} we presented a certifying
algorithm to recognize fully threshold-colourable graphs which runs in
$O(|V|+|E|)$ time and uses only elementary data structures.   Linear algorithms
could also be obtained using standard approaches like modular decomposition or a
clever use of multiple passes of sorting algorithms, nonetheless, our algorithm
is far simpler to implement and it is optimized by stopping when a
no-certificate is found.   We accomplished that by taking advantage of the
structure of threshold graphs.   In this context, we propose the following
problem.

\begin{problem}
\label{prb:algorithms}
    Find simple certifying algorithms to recognize fully
    $\mathcal{F}$-colourable graphs for the other families $\mathcal{F}$
    considered in \Cref{thm:C4-consequences}.
\end{problem}

Recall that there is an algorithm to recognize fully $\mathcal{F}$-colourable
graphs running in the same time bound as an algorithm to decide membership in
$\mathcal{F}$.   Thus, an algorithm that positively answers
\Cref{prb:algorithms} is expected to have the same running time as recognizing
the family $\mathcal{F}$.

Lastly, we highlight the problem we discussed in \Cref{sec:main} regarding the
family of minimal obstructions to full $\mathcal{F}$-colouring when
$\mathcal{F}$ is a hereditary family of forests. In order to have a better
understanding of such a family, we proposed the problem of finding, for any
positive integer $k$, every point determining forest having independence number
equal to $k$.   It would be good to know if there is a nice answer to this
question.


\section*{Acknowledgements}
We kindly thank Seyyed~Aliasghar~Hosseini for his involvement in a preliminar
version of this work and the useful discussions we had on the subject.

We are grateful to Ross McConnell for pointing us to a linear-time algorithm for
finding all the false twin vertices in a graph.   We are also grateful to
Santiago Guzm\'an-Pro for many useful discussions that led to a better
presentation of the material in this work and to proposing \Cref{prb:npc-poly}.


\end{document}